\DeclareMathOperator*{\Times}{\scalerel*{\times}{\textstyle\sum}}
\DeclareMathOperator{\Aut}{Aut}
\DeclareMathOperator{\End}{End}
\DeclareMathOperator{\Fix}{Fix}
\DeclareMathOperator{\Hom}{Hom}
\DeclareMathOperator{\Id}{Id}
\DeclareMathOperator{\im}{Im}
\DeclareMathOperator{\Span}{span}
\DeclareMathOperator{\SpecR}{Spec_R}
\newcommand*{\eg}{e.g.\ }
\newcommand*{\fgtf}{finitely generated torsion-free }
\newcommand*{\grpgen}[1]{\left\langle{#1}\right\rangle}
\newcommand*{\grppres}[2]{\langle #1 \mid #2 \rangle}
\newcommand*{\ie}{i.e.\ }
\newcommand*{\inv}[1]{{#1}^{-1}}
\newcommand*{\M}{\mathcal{M}}
\newcommand*{\malclos}[1]{{#1}^{\Q}}
\newcommand*{\middlebar}{\ \middle | \ }
\newcommand*{\n}{\mathfrak{n}}
\newcommand*{\N}{\mathbb{N}}
\newcommand*{\Q}{\mathbb{Q}}
\newcommand*{\Rinf}{R_{\infty}}
\newcommand*{\restr}[2]{\left.#1\right|_{#2}}
\newcommand*{\size}[1]{\left| #1 \right|}
\newcommand*{\Z}{\mathbb{Z}}
\renewcommand{\phi}{\varphi}
\let\originalleft\left
\let\originalright\right
\renewcommand{\left}{\mathopen{}\mathclose\bgroup\originalleft}
\renewcommand{\right}{\aftergroup\egroup\originalright}
\declaretheorem[style=definition, name = Definition, numberwithin=section]{defin}
\declaretheorem[name = Theorem, sibling=defin]{theorem}
\declaretheorem[name = Lemma, sibling=defin]{lemma}
\declaretheorem[name = Proposition, sibling=defin]{prop}
\declaretheorem[name = Corollary, sibling=defin]{cor}
\declaretheorem[style=remark, name = Remark, numbered = no]{remark}
\declaretheorem[name = Theorem, numbered = no]{theorem*}
\declaretheorem[name = Question, numbered = no]{quest*}
\numberwithin{equation}{section}
\crefname{prop}{Proposition}{Propositions}
\crefname{quest}{Question}{Questions}
\crefname{cor}{Corollary}{Corollaries}
\title{The Reidemeister spectrum of direct products of nilpotent groups}
\author{Pieter Senden\footnote{Researcher funded by FWO-fellowship fundamental research (file number 1112522N)}}
\date{}
\begin{document}
\maketitle
\begin{abstract}
	We investigate the Reidemeister spectrum of direct products of nilpotent groups. More specifically, we prove that the Reidemeister spectra of the individual factors yield complete information for the Reidemeister spectrum of the direct product if all groups are finitely generated torsion-free nilpotent and have a directly indecomposable rational Malcev completion. We show this by determining the complete automorphism group of the direct product.
\end{abstract}
\let\thefootnote\relax\footnote{2020 {\em Mathematics Subject Classification.} Primary: 20E36; Secondary: 20F18, 20E22}
\let\thefootnote\relax\footnote{{\em Keywords and phrases.} Nilpotent groups, twisted conjugacy, Reidemeister number, Reidemeister spectrum, automorphism group}
\section{Introduction}
Let \(G\) be a group and \(\phi \in \End(G)\). We define an equivalence relation on \(G\) by stating that \(x, y \in G\) are \emph{\(\phi\)-conjugate} if \(x = zy\inv{\phi(z)}\) for some \(z \in G\). The number of equivalence classes is called the \emph{Reidemeister number} \(R(\phi)\) of \(\phi\), and the set \(\SpecR(G) = \{R(\psi) \mid \psi \in \Aut(G)\}\) is called the \emph{Reidemeister spectrum} of \(G\). Finally, we say that \(G\) has the \emph{\(\Rinf\)-property} if \(\SpecR(G) = \{\infty\}\).

This article has both a topological and an algebraic motivation. Algebraically, we want to investigate the following question, which the author first posed in \cite{Senden21}:
\begin{quest*}
		Let \(G\) and \(H\) be (non-isomorphic) groups and let \(n \geq 2\) be an integer. What can we say about \(\SpecR(G^{n})\) and \(\SpecR(G \times H)\) in terms of \(\SpecR(G)\) and \(\SpecR(H)\)?
\end{quest*}
In the aforementioned article, we obtained a complete answer in the case where \(G\) and \(H\) are directly indecomposable and centreless. In this article, we investigate a somewhat diametrically opposite situation, namely the situation where all groups involved are nilpotent. The Reidemeister spectrum of direct products of nilpotent groups has already been investigated in some specific cases. S.\ Tertooy \cite{Tertooy19} proved that the Reidemeister spectrum of a direct product of finitely many free nilpotent groups of finite rank can be completely expressed in terms of the Reidemeister spectra of the individual factors. Recently, K.\ Dekimpe and M.\ Lathouwers \cite{DekimpeLathouwers21} proved a similar result for direct products of \(2\)-step nilpotent groups associated to graphs. The algebraic motivation is to find and prove a generalisation of both of those results.

As for the topological motivation, twisted conjugacy and Reidemeister numbers have their origin in topological Nielsen fixed-point theory, where they are used to determine bounds on the number of fixed points of a continuous self-map. We briefly describe this here; for more details, we refer to reader to \eg \cite{Jiang83}. 

Let \(X\) be a compact manifold and \(f : X \to X\) a continuous self-map. One of the goals of topological fixed-point theory is to find appropriate lower bounds on the number of fixed points of \(f\), or even on the number of fixed points of maps homotopic to \(f\). Using the universal cover of \(X\), one can subdivide \(\Fix(f) = \{x \in X \mid f(x) = x\}\) into (possibly empty) sets called fixed-point classes. The number of classes is called the Reidemeister number \(R(f)\) of \(f\). This number is a homotopy invariant, which means that if \(g\) is homotopic to \(f\), then \(R(g) = R(f)\). If we let \(f_{*}: \pi_{1}(X) \to \pi_{1}(X)\) denote the induced homomorphism on the fundamental group \(\pi_{1}(X)\) of \(X\), it moreover holds that \(R(f) = R(f_{*})\), where the latter is the (algebraic) Reidemeister number defined above. Thus, they coincide, which also explains why they carry the same name.

There is a way to assign to each fixed-point class (empty or not) a non-negative integer, called the index of the class. A class with index \(0\) is called inessential, otherwise it is called essential. The number of essential fixed-point classes of a map \(f\) is called the Nielsen number \(N(f)\). Like the Reidemeister number, the Nielsen number is a homotopy invariant. One can show that \(N(f) \leq \size{\Fix(g)}\) for any map \(g\) homotopic to \(f\), and for large classes of manifolds (\eg all manifolds of dimension at least \(3\)), there exists a map \(g\) homotopic to \(f\) that achieves equality. Consequently, the Nielsen number provides us a tight lower bound on the number of fixed points of maps homotopic to \(f\).

However, the Nielsen number is generally hard to compute, and requires as an intermediate step the computation of \(R(f)\). In many situations, though, the value of \(R(f)\) gives us all the information needed to determine \(N(f)\). Let \(G\) be a connected, simply connected, nilpotent Lie group, \(N\) a discrete, cocompact subgroup of \(G\) and define \(X\) to be the quotient space \(N \backslash G\). Then \(X\) is called a (compact) nilmanifold, and \(\pi_{1}(X) = N\). Furthermore, the following holds for every continuous self-map \(f: X \to X\) (see \eg \cite{HeathKeppelmann97}):
\[
	N(f) = \begin{cases}
		0	&	\mbox{if } R(f) = \infty,	\\
		R(f)	&	\mbox{if } R(f) < \infty.
	\end{cases}
\]
Therefore, studying (algebraic) Reidemeister numbers on direct products of nilpotent groups yields important tools to determine Nielsen numbers on products of nilmanifolds.

The aim of this article is to prove that the Reidemeister spectrum of a direct product of nilpotent groups is completely determined by the spectra of its factors if all factors are \fgtf and have a directly indecomposable rational Malcev completion. To do so, we fully describe the automorphism group of such a direct product. Descriptions of the automorphism group of direct products are known for other classes of groups, see \eg \cite{Bidwell08,BidwellCurranMcCaughan06,Johnson83,Senden21}. Using this description of the automorphism group, we then determine the Reidemeister spectrum.

This paper is structured as follows. In Section 2, we provide all the necessary preliminary and more technical results, ordered by theme. In Section 3, we first determine the automorphism group of a direct product of nilpotent groups satisfying the aforementioned conditions, after which we proceed to determining the Reidemeister spectrum. Finally, in Section 4, we show that this result is indeed a generalisation of the results by K.\ Dekimpe and M.\ Lathouwers, and S.\ Tertooy.

\section{Preliminaries}
\subsection{Reidemeister numbers}
To describe the automorphism group of a direct product, we use a representation of endomorphisms as described by \eg F.\ Johnson in \cite[\S1]{Johnson83}, J.\ Bidwell, M.\ Curran and D.\ McCaughan in \cite[Theorem~1.1]{BidwellCurranMcCaughan06}, and J.\ Bidwell in \cite[Lemma~2.1]{Bidwell08}.

Let \(G_{1}, \ldots, G_{n}\) be groups. Define
\[
	\M = \left\{ \begin{pmatrix} \phi_{11} & \ldots & \phi_{1n} \\ \vdots & \ddots & \vdots \\ \phi_{n1} & \ldots &\phi_{nn} \end{pmatrix} \middlebar \begin{array}{ccc} \forall 1 \leq i, j \leq n: \phi_{ij} \in \Hom(G_{j}, G_{i}) \\ \forall 1 \leq i, k, l \leq n: k \ne l \implies [\im \phi_{ik}, \im \phi_{il}] = 1\end{array}\right\}
\]
and equip it with matrix multiplication. Here, the addition of two homomorphisms \(\phi, \psi \in \Hom(G_{j}, G_{i})\) with commuting images is defined as \((\phi + \psi)(g) := \phi(g)\psi(g)\), and the multiplication of \(\psi \in \Hom(G_{i}, G_{k})\) and \(\phi \in \Hom(G_{j}, G_{i})\) is defined as \(\psi \phi := \psi \circ \phi\). It is readily verified that this puts a monoid structure on \(\M\), where the diagonal matrix with the respective identity maps on the diagonal is the neutral element.

\begin{lemma}[{See \eg \cite[Lemma~2.1]{Bidwell08}, \cite[Theorem~1.1]{BidwellCurranMcCaughan06}, \cite[\S1]{Johnson83}, \cite[Lemma~2.1]{Senden21}}]	\label{lem:MatrixRepresentationEndomorphismMonoid}
	For \(G = \Times\limits_{i = 1}^{n}G_{i}\), we have that \(\End (G) \cong \M\) as monoids.
\end{lemma}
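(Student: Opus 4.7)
The plan is to construct inverse monoid homomorphisms $\Phi : \M \to \End(G)$ and $\Psi : \End(G) \to \M$ by passing through the canonical inclusions $\iota_j : G_j \hookrightarrow G$ and projections $\pi_i : G \to G_i$.

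First, I would define $\Psi$ by setting $\Psi(\phi)_{ij} := \pi_i \circ \phi \circ \iota_j \in \Hom(G_j, G_i)$. The required commutativity condition is essentially free: for $k \neq l$, the subgroups $\iota_k(G_k)$ and $\iota_l(G_l)$ commute elementwise in $G$, so $\phi(\iota_k(G_k))$ and $\phi(\iota_l(G_l))$ commute, and applying the projection $\pi_i$ shows that $\im \phi_{ik}$ and $\im \phi_{il}$ commute in $G_i$. Thus $\Psi(\phi) \in \M$.

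Next, I would define $\Phi$ by sending a matrix $(\phi_{ij}) \in \M$ to the map $\Phi((\phi_{ij})): (g_1, \ldots, g_n) \mapsto \left(\prod_{j=1}^n \phi_{1j}(g_j), \ldots, \prod_{j=1}^n \phi_{nj}(g_j)\right)$. The commutativity hypothesis on $\M$ is precisely what is needed to make this a well-defined homomorphism: when multiplying $(g_1, \ldots, g_n)(g_1', \ldots, g_n') = (g_1g_1', \ldots, g_ng_n')$, rearranging the factors $\phi_{ij}(g_j g_j') = \phi_{ij}(g_j)\phi_{ij}(g_j')$ so that the $g$-factors come first requires commuting $\phi_{ik}(g_k')$ past $\phi_{il}(g_l)$ for $k \neq l$. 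That $\Psi \circ \Phi$ and $\Phi \circ \Psi$ are the identity follows from $\pi_i \circ \iota_j = \delta_{ij}\,\Id_{G_j}$ together with the observation that every $g \in G$ factors uniquely as $g = \prod_j \iota_j(\pi_j(g))$, the order of the product being irrelevant since the $\iota_j(G_j)$ commute pairwise.

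The only nontrivial verification is that $\Phi$ (or equivalently $\Psi$) is a monoid homomorphism, that is, composition corresponds to matrix multiplication. Writing out $\Psi(\phi \circ \psi)_{ij} = \pi_i \circ \phi \circ \psi \circ \iota_j$, I would insert the ``resolution of the identity'' $\Id_G = \prod_k \iota_k \circ \pi_k$ between $\phi$ and $\psi$, giving $\pi_i \circ \phi \circ \bigl(\prod_k \iota_k \pi_k\bigr) \circ \psi \circ \iota_j$. Because each $\iota_k \pi_k \psi \iota_j$ takes values in $\iota_k(G_k)$, distinct terms in this product commute, and applying $\pi_i \circ \phi$ gives a product of the $\phi_{ik} \psi_{kj}$ in $G_i$ — which is exactly the $(i,j)$-entry of the matrix product $(\phi_{ij})(\psi_{ij})$ under the addition defined on $\M$. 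The main obstacle is this bookkeeping step, where one must carefully invoke the commutativity conditions to justify the rearrangement of factors; once that is done, everything falls into place and neutral elements are mapped to neutral elements by inspection.
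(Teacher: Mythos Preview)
Your proposal is correct and follows the standard argument found in the cited references. Note that the paper itself does not give a proof of this lemma at all: it is stated with citations to \cite{Bidwell08,BidwellCurranMcCaughan06,Johnson83,Senden21} and used as a known result, so there is no in-paper proof to compare against.
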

We identify an endomorphism of \(G\) with its matrix representation and write \(\phi = (\phi_{ij})_{ij}\), where \(\phi_{ij} = \pi_{i} \circ \phi \circ e_{j}\) with \(\pi_{i}: G \to G_{i}\) the canonical projection and \(e_{j}: G_{j} \to G\) the canonical inclusion. In a matrix, we write the identity map as \(1\) and the trivial homomorphism as \(0\).

\begin{lemma}[{See \eg \cite[Lemma~2.2]{Senden21}}]	\label{lem:automorphismOfDirectProductImpliesNormalImages}
	With the notations as above, let \(\phi \in \Aut(G)\). Then for all \(1 \leq i \leq n\), \(G_{i}\) is generated by \(\{\im \phi_{ij} \mid 1 \leq j \leq n\}\).
\end{lemma}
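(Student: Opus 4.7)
The plan is to exploit surjectivity of \(\phi\) directly via the matrix representation from \Cref{lem:MatrixRepresentationEndomorphismMonoid}. Fix \(1 \leq i \leq n\) and let \(H_i := \grpgen{\im \phi_{ij} \mid 1 \leq j \leq n} \leq G_i\); the goal is to show \(H_i = G_i\).

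First, I would unpack what the matrix representation tells us about \(\phi\) at the level of elements. Any element of \(G\) can be written uniquely as \(e_{1}(g_{1}) \cdots e_{n}(g_{n})\) with \(g_{j} \in G_{j}\), and the factors in this product commute pairwise because they sit in distinct direct factors. Applying \(\phi\) and using that it is a homomorphism, the \(i\)-th component of \(\phi(e_{1}(g_{1})\cdots e_{n}(g_{n}))\) equals
\[
(\pi_{i}\phi e_{1})(g_{1}) \cdots (\pi_{i}\phi e_{n})(g_{n}) = \phi_{i1}(g_{1}) \cdots \phi_{in}(g_{n}),
\]
where the factors commute (for \(k \neq l\), \([\im \phi_{ik}, \im \phi_{il}] = 1\) by the defining condition of \(\M\)). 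In particular, every \(i\)-th component of an image under \(\phi\) lies in \(H_{i}\).

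Now I would invoke surjectivity of \(\phi\). Given an arbitrary \(h_{i} \in G_{i}\), choose a preimage \(g \in G\) with \(\phi(g) = e_{i}(h_{i})\) and write \(g = e_{1}(g_{1}) \cdots e_{n}(g_{n})\). Comparing \(i\)-th components via the formula above yields
\[
h_{i} = \phi_{i1}(g_{1}) \cdots \phi_{in}(g_{n}) \in H_{i},
\]
whence \(G_{i} \subseteq H_{i}\) and equality follows.

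There is no real obstacle here: the argument is essentially a bookkeeping exercise on top of \Cref{lem:MatrixRepresentationEndomorphismMonoid}. The only subtlety worth highlighting explicitly is that the commutativity condition in the definition of \(\M\) is exactly what is needed for the displayed product to be well-defined (so that ``product of images'' coincides with ``generated subgroup''), and one must be careful that injectivity of \(\phi\) is not used — only surjectivity is required for this lemma.
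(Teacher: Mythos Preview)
Your argument is correct and is precisely the standard one: use surjectivity of \(\phi\) to hit an arbitrary \(e_i(h_i)\), decompose the preimage componentwise, and read off the \(i\)th coordinate. Note that the paper itself does not supply a proof of this lemma but merely cites it from \cite{Senden21}, so there is nothing to compare against; your write-up would serve perfectly well as the omitted proof. One minor remark: the commutativity condition \([\im\phi_{ik},\im\phi_{il}]=1\) is not actually needed for the conclusion \(h_i\in H_i\), since \(H_i\) is a subgroup and hence closed under products regardless of ordering---so your closing caveat slightly overstates its role here.
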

Regarding Reidemeister spectra, we have the following general inclusion for direct products of groups.
\begin{defin}
	For \(a \in \N_{0} \cup \{\infty\}\), we define the product \(a \cdot \infty\) to be equal to \(\infty\). Let \(A_{1}, \ldots, A_{n}\) be subsets of \(\N_{0} \cup \{\infty\}\). We then define
	\[
		A_{1} \cdot \ldots \cdot A_{n} := \prod_{i = 1}^{n} A_{i} := \{a_{1} \ldots a_{n} \mid \forall i \in \{1, \ldots, n\}: a_{i} \in A_{i}\}.
	\]
	If \(A_{1} = \ldots = A_{n} =: A\), we also write \(A^{(n)}\) for the \(n\)-fold product of \(A\) with itself.
\end{defin}

\begin{prop}[{See \eg \cite[Corollary~2.6]{Senden21}}]	\label{prop:SpecRDirectProductCharacteristicSubgroups}
	Let \(G_{1}, \ldots, G_{n}\) be groups and put \(G = \Times\limits_{i = 1}^{n} G_{i}\). Then
	\[
		\left\{R(\phi) \middlebar \phi \in \Times\limits_{i = 1}^{n} \Aut(G_{i})\right\} = \prod_{i = 1}^{n} \SpecR(G_{i}) \subseteq \SpecR(G).
	\]
\end{prop}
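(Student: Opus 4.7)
The plan is to show that twisted conjugacy with respect to a ``diagonal'' automorphism \(\phi = \Times_{i=1}^{n} \phi_{i}\) of \(G\) decomposes componentwise, and then deduce both the asserted equality and the inclusion into \(\SpecR(G)\) almost immediately.

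I would fix \(\phi_i \in \Aut(G_i)\) for each \(1 \le i \le n\) and consider \(\phi := \Times_{i=1}^{n} \phi_i\), which lies in \(\Aut(G)\). Unpacking the definition of twisted conjugacy, two tuples \(x = (x_1, \ldots, x_n)\) and \(y = (y_1, \ldots, y_n)\) in \(G\) are \(\phi\)-conjugate iff there exists \(z = (z_1, \ldots, z_n) \in G\) with \(x = z y \inv{\phi(z)}\); since \(\phi\) acts componentwise, this equation is equivalent to the system \(x_i = z_i y_i \inv{\phi_i(z_i)}\) for every \(i\). Thus \(x\) and \(y\) lie in the same \(\phi\)-class iff \(x_i\) and \(y_i\) lie in the same \(\phi_i\)-class for every \(i\). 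Consequently, the \(\phi\)-class of \(y\) is the direct product of the \(\phi_i\)-classes of the \(y_i\), and the set of \(\phi\)-classes of \(G\) is in natural bijection with the cartesian product of the sets of \(\phi_i\)-classes of the \(G_i\). Counting cardinalities (and using the convention \(a \cdot \infty = \infty\) introduced just above) yields \(R(\phi) = \prod_{i=1}^n R(\phi_i)\).

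As each \(\phi_i\) varies independently through \(\Aut(G_i)\), the value \(R(\phi_i)\) ranges through \(\SpecR(G_i)\), so the left-hand set equals \(\prod_{i=1}^n \SpecR(G_i)\) by the very definition of the product of subsets of \(\N_0 \cup \{\infty\}\). Since each such \(\phi = \Times_{i=1}^n \phi_i\) is in fact an element of \(\Aut(G)\), its Reidemeister number belongs to \(\SpecR(G)\), which gives the claimed inclusion.

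I do not expect any serious obstacle here: the whole proposition rests only on the observation that the twisted conjugation equation decouples across factors when the automorphism itself decouples. The real content of the paper lies precisely in the opposite direction, namely in analysing the non-diagonal automorphisms of \(G\) and showing, for the class of groups under consideration, that they contribute no Reidemeister numbers beyond those already present in \(\prod_{i=1}^{n} \SpecR(G_i)\).
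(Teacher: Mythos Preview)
Your argument is correct and is the standard one: for a diagonal automorphism the twisted-conjugacy relation decouples across the factors, so \(R(\phi) = \prod_{i} R(\phi_{i})\), and both the asserted equality and the inclusion follow immediately. Note that the paper does not actually supply its own proof of this proposition---it merely cites it from \cite[Corollary~2.6]{Senden21}---so there is no in-paper argument to compare against; your write-up is precisely the natural justification one would expect.
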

Given a group \(G\) and an integer \(n \geq 1\), the symmetric group \(S_{n}\) embeds in \(\Aut(G^{n})\) in the following way:
\[
	S_{n} \to \Aut G^{n}: \sigma \mapsto (P_{\inv{\sigma}}: G^{n} \to G^{n}: (g_{1}, \ldots, g_{n}) \mapsto (g_{\inv{\sigma}(1)}, \ldots, g_{\inv{\sigma}(n)})),
\]
where the matrix representation of \(P_{\inv{\sigma}}\) is given by
\[
	P_{\inv{\sigma}} = \begin{pmatrix}
		e_{\inv{\sigma}(1)}	\\
		\vdots		\\
		e_{\inv{\sigma}(n)}
	\end{pmatrix}.
\]
Here, \(e_{i}\) is a row with a \(1\) on the \(i\)th spot and zeroes elsewhere. This inclusion combined with the subgroup \(\Aut(G)^{n}\) yields us the subgroup \(\Aut(G) \wr S_{n}\) of \(\Aut(G^{n})\).

For \(n\)-fold direct products of a group with itself, we can obtain a bit more information about the Reidemeister spectrum than just the inclusion in \cref{prop:SpecRDirectProductCharacteristicSubgroups}.
\begin{prop}[{See \eg \cite[Corollary~2.10]{Senden21}}]	\label{prop:ReidemeisterNumbersOfAut(G)wrSn}
	Let \(G\) be a group and \(n \geq 1\) a natural number. Then
	\[
		\{R(\phi) \mid \phi \in \Aut(G) \wr S_{n}\} = \bigcup_{i = 1}^{n} \SpecR(G)^{(i)} \subseteq \SpecR(G^{n}).
	\]
\end{prop}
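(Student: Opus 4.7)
My plan is to decompose any $\phi \in \Aut(G) \wr S_n$ along the cycles of its underlying permutation and to show that each cycle contributes exactly one factor from $\SpecR(G)$ to $R(\phi)$. Write $\phi = D \circ P_\tau$ with $D = \Diag(\alpha_1, \ldots, \alpha_n)$, $\alpha_i \in \Aut(G)$, and $\tau \in S_n$, so that $\phi(g_1, \ldots, g_n) = (\alpha_1(g_{\tau(1)}), \ldots, \alpha_n(g_{\tau(n)}))$. Decomposing $\tau = c_1 \cdots c_k$ into disjoint cycles partitions the coordinates, so $G^n$ splits as an internal direct product of $\phi$-invariant subgroups $H_1, \ldots, H_k$ (one for each orbit) and $\phi = \phi_1 \times \cdots \times \phi_k$ with $\phi_j \in \Aut(H_j)$. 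Since Reidemeister numbers are multiplicative on direct products of automorphisms (the same fact behind \cref{prop:SpecRDirectProductCharacteristicSubgroups}), we have $R(\phi) = \prod_{j=1}^{k} R(\phi_j)$.

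The crucial step is the single-cycle computation: if $c = (i_1, i_2, \ldots, i_m)$, I claim $R(\phi_c) = R(\alpha_{i_1} \alpha_{i_2} \cdots \alpha_{i_m})$, which lies in $\SpecR(G)$. I would prove this by showing that every $(g_{i_1}, \ldots, g_{i_m}) \in H_c$ is $\phi_c$-conjugate to a normal form $(x, 1, \ldots, 1)$, achieved by solving the equations $u_{i_\ell} g_{i_\ell} \alpha_{i_\ell}(u_{i_{\ell+1}})^{-1} = 1$ successively from $\ell = m$ down to $\ell = 2$ (cyclic indices, with $u_{i_1} = 1$). The residual $\phi_c$-conjugations preserving the zero-tail shape force $u_{i_\ell} = \alpha_{i_\ell}(u_{i_{\ell+1}})$ for $\ell \geq 2$, so $u_{i_2}, \ldots, u_{i_m}$ are determined by $u_{i_1}$, and the induced transformation of the surviving coordinate is exactly $(\alpha_{i_1} \alpha_{i_2} \cdots \alpha_{i_m})$-twisted conjugation on $G$. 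Combined with the previous paragraph, this yields $R(\phi) \in \SpecR(G)^{(k)}$, proving the inclusion $\subseteq \bigcup_{i=1}^{n} \SpecR(G)^{(i)}$.

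For the reverse inclusion, given $r_1, \ldots, r_k \in \SpecR(G)$ with $1 \leq k \leq n$, pick $\beta_j \in \Aut(G)$ realising $r_j$, choose any $\tau \in S_n$ with exactly $k$ disjoint cycles of sizes summing to $n$, and define $D$ by placing $\beta_j$ on one coordinate of the $j$-th cycle and the identity on all other coordinates; the composition along the $j$-th cycle then reduces to $\beta_j$, so $R(\phi) = r_1 \cdots r_k$ by the formula established above. The containment $\{R(\phi) \mid \phi \in \Aut(G) \wr S_n\} \subseteq \SpecR(G^n)$ is automatic from $\Aut(G) \wr S_n \leq \Aut(G^n)$. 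I expect the single-cycle computation to be the main obstacle, since it requires careful bookkeeping of the interaction between the permutation and the diagonal factors; any residual ambiguity in the order of composition is harmless, because $R(\beta \alpha) = R(\alpha \beta)$ whenever one of the maps is an automorphism, so $R$ is invariant under cyclic rotation of the composed $\alpha_{i_\ell}$.
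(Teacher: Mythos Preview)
Your argument is correct and is the standard way to establish this result. Note, however, that the paper does not actually prove \cref{prop:ReidemeisterNumbersOfAut(G)wrSn}: it is stated with a reference to \cite[Corollary~2.10]{Senden21} and no proof is given in the present paper. Your cycle-by-cycle reduction, the normal-form computation $(x,1,\ldots,1)$ for a single cycle yielding $R(\phi_c)=R(\alpha_{i_1}\cdots\alpha_{i_m})$, and the explicit construction for the reverse inclusion are exactly the ingredients behind the cited corollary, so there is nothing substantively different to compare. One small point of care: your normal-form argument implicitly uses that if two normal-form tuples are $\phi_c$-conjugate via \emph{some} $u$, then that $u$ automatically satisfies the tail constraints $u_{i_\ell}=\alpha_{i_\ell}(u_{i_{\ell+1}})$ for $\ell\geq 2$; this is immediate from the coordinate equations, but it is worth making explicit so that the bijection between $\phi_c$-classes and $(\alpha_{i_1}\cdots\alpha_{i_m})$-classes is genuinely two-sided.
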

For nilpotent groups, finally, we also need the following relation among the Reidemeister numbers associated to the groups in a central extension.
\begin{prop}	\label{prop:ReidemeisterNumberEqualsProductOfCentreAndQuotientNumber}
	Let \(N\) be a \fgtf nilpotent group and let \(\phi \in \Aut(N)\). Let \(\phi_{Z}\) and \(\bar{\phi}\) denote the induced automorphisms on \(Z(N)\) and \(N / Z(N)\), respectively. Then \(R(\phi) = R(\phi_{Z})R(\bar{\phi})\).
\end{prop}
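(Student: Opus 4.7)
The plan is to use the projection $\pi : N \to Q := N/Z(N)$ to reduce the computation of $R(\phi)$ to a fibre-by-fibre count. Since $\phi$ preserves $Z(N)$, it descends to $\bar{\phi} \in \Aut(Q)$ and restricts to $\phi_{Z} \in \Aut(Z(N))$, and $\pi$ carries $\phi$-conjugacy classes of $N$ onto $\bar{\phi}$-conjugacy classes of $Q$, inducing a surjection $\pi_{*}$ between the corresponding sets. If $R(\bar{\phi}) = \infty$, surjectivity immediately forces $R(\phi) = \infty$ and the identity holds via the convention $a \cdot \infty = \infty$, so I may assume $R(\bar{\phi}) < \infty$ and aim to show that each fibre of $\pi_{*}$ contains exactly $R(\phi_{Z})$ classes.

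Fix $y \in N$ and let $F_{y}$ be the fibre of $\pi_{*}$ over $[\bar{y}]_{\bar{\phi}}$. Centrality of $Z(N)$ yields $[yc]_{\phi} = [y]_{\phi} \cdot c$ for every $c \in Z(N)$, and every class in $F_{y}$ has this form, so $Z(N)$ acts transitively on $F_{y}$ by right multiplication. The stabiliser $K_{y}$ of $[y]_{\phi}$ is a subgroup of $Z(N)$ satisfying $|F_{y}| = |Z(N)/K_{y}|$, and $c \in K_{y}$ iff $y^{-1}uy\phi(u)^{-1} = c \in Z(N)$ for some $u \in N$; reducing this equality modulo $Z(N)$ forces $\bar{u}$ to be a fixed point of the automorphism $\alpha_{y} : \bar{v} \mapsto \bar{y}\,\bar{\phi}(\bar{v})\,\bar{y}^{-1}$ of $Q$. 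Since $\alpha_{y}$ differs from $\bar{\phi}$ only by left-composition with the inner automorphism $\iota_{\bar{y}}$, we have $R(\alpha_{y}) = R(\bar{\phi}) < \infty$; invoking the standard fact that an automorphism of a finitely generated torsion-free nilpotent group with finite Reidemeister number has trivial fixed-point set, I obtain $\Fix(\alpha_{y}) = \{\bar{1}\}$, so every such $u$ lies in $Z(N)$. For $u \in Z(N)$, centrality gives $y^{-1}uy\phi(u)^{-1} = u\phi_{Z}(u)^{-1}$, whence $K_{y} = \{u\phi_{Z}(u)^{-1} \mid u \in Z(N)\} = \im(\Id - \phi_{Z})$ and $|F_{y}| = R(\phi_{Z})$. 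Summing over the $R(\bar{\phi})$ fibres yields $R(\phi) = R(\phi_{Z}) \cdot R(\bar{\phi})$.

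The main obstacle is the key lemma that for an automorphism $\psi$ of a finitely generated torsion-free nilpotent group, $R(\psi) < \infty$ implies $\Fix(\psi) = \{1\}$. I would invoke the Malcev-completion description of the Reidemeister number, already compatible with the paper's framework: $R(\psi) = |\det(\Id - d\psi)|$ on the rational Malcev Lie algebra (interpreted as $\infty$ when the determinant vanishes), so finiteness means $\Id - d\psi$ is invertible, forcing the fixed subgroup to be trivial in the Malcev completion and therefore in the group itself by torsion-freeness. With this tool in hand, the fibre-counting argument above is routine, and one could alternatively bypass the whole count by passing directly to the Lie algebra, where the short exact sequence $0 \to \mathfrak{z} \to \mathfrak{n} \to \mathfrak{n}/\mathfrak{z} \to 0$ puts $d\phi$ in block-triangular form with diagonal blocks $d\phi_{Z}$ and $d\bar{\phi}$, giving the multiplicativity $\det(\Id - d\phi) = \det(\Id - d\phi_{Z}) \det(\Id - d\bar{\phi})$ immediately.
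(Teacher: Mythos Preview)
Your argument is correct. The paper's own proof is a one-line citation (``This follows from the proof of the well-known product formula \ldots\ see \cite[Lemma~2.7]{Romankov11}''), so there is no detailed proof in the paper to compare against; you have effectively written out what the paper defers to the literature. Both routes you describe---the fibre-by-fibre count over the projection \(N \to N/Z(N)\), using that \(R(\bar\phi)<\infty\) forces \(\Fix(\iota_{\bar y}\circ\bar\phi)=1\), and the block-triangular determinant identity on the Malcev Lie algebra---are exactly the standard ingredients behind the cited product formula, so your approach and the paper's (implicit) approach coincide.
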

\begin{proof}
	This follows from the proof of the well-known product formula for Reidemeister numbers in \fgtf nilpotent groups (see \eg \cite[Lemma~2.7]{Romankov11}).
\end{proof}

\subsection{Nilpotent groups}
We collect here the technical results needed in the derivation of the automorphism group of a direct product of nilpotent groups.
\begin{lemma}	\label{lem:centralisersFiniteIndexSubgroupsFGTFNilpotentGroups}
	Let \(N\) be a \fgtf nilpotent group and let \(H\) be a finite index subgroup. Then \(C_{N}(H) = Z(N)\).
\end{lemma}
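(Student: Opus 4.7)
The inclusion $Z(N) \subseteq C_N(H)$ is immediate from the definitions: any element of the centre commutes with all of $N$, in particular with $H$. So the content of the statement is the reverse inclusion $C_N(H) \subseteq Z(N)$.

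My plan is the following. Pick $x \in C_N(H)$ and an arbitrary $g \in N$; I want to show $[x,g] = 1$. Since $[N : H] < \infty$, the coset $gH$ has finite order in $N / H$, so there exists an integer $k \geq 1$ with $g^k \in H$. By assumption on $x$, this yields $x g^k \inv{x} = g^k$, which can be rewritten as
\[
    (x g \inv{x})^k = g^k.
\]
At this point I would invoke the fact that in a torsion-free nilpotent group, $k$th roots are unique when they exist (this is Mal'cev's classical result; equivalently, $N$ embeds in its rational Malcev completion $\malclos{N}$, which is divisible and uniquely divisible, so any two $k$th roots of the same element must coincide). Applied to the elements $x g \inv{x}$ and $g$, this forces $x g \inv{x} = g$, i.e.\ $[x,g] = 1$. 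Since $g \in N$ was arbitrary, $x \in Z(N)$.

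The only non-trivial ingredient is the unique extraction of roots in torsion-free nilpotent groups, which is a standard property and the sole obstacle one needs to overcome; everything else is formal manipulation of the centraliser and coset definitions. I do not expect to need induction on nilpotency class or any subtler structure theory, because the uniqueness-of-roots lemma collapses the argument to a single line.
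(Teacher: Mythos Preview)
Your proof is correct and matches the paper's argument essentially line for line: pick an element of the centraliser, raise an arbitrary element of \(N\) to a power landing in \(H\), then invoke uniqueness of roots in torsion-free nilpotent groups to conclude. The only cosmetic difference is that the paper phrases the key step as ``\(gn^{k}\inv{g} = n^{k}\) implies \(gn\inv{g} = n\)'' without explicitly naming the Malcev completion, but the underlying fact used is the same.
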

\begin{proof}
	Let \(g \in C_{N}(H)\) and let \(n \in N\). As \(H\) has finite index in \(N\), there is a \(k \in \N_{0}\) such that \(n^{k} \in H\). For that \(k\), it then holds that \(gn^{k}\inv{g} = n^{k}\). As \(N\) is torsion-free nilpotent, it follows that \(gn\inv{g} = n\). Since \(n\) was arbitrary, this shows that \(g\) lies in \(Z(N)\).
	
	The inclusion \(Z(N) \leq C_{N}(H)\) is immediate.
\end{proof}
We need two properties of the Hirsch length of a polycyclic group. Recall that a \emph{polycyclic group} is a group admitting a subnormal series with cyclic factors. The \emph{Hirsch length} \(h(G)\) of a polycyclic group \(G\) is then defined as the number of infinite cyclic factors in such a series. This number is independent of the series chosen.
\begin{lemma}[{See \eg \cite[Exercise~8]{Segal83}}]	\label{lem:HirschLengthProperties}
	Let \(G\) be a polycyclic group, \(H\) a subgroup and \(N\) a normal subgroup. Then
	\begin{enumerate}[1)]
		\item \(h(H) \leq h(G)\) with equality if and only if \([G : H] < \infty\);
		\item \(h(G) = h(N) + h(G / N)\).
	\end{enumerate}
\end{lemma}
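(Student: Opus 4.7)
My plan is to prove (2) first, since it is purely structural. I fix a subnormal cyclic series $N = N_0 \rhd N_1 \rhd \ldots \rhd N_r = 1$ of $N$ and a subnormal cyclic series of $G/N$, pull the latter back along the quotient map $G \to G/N$, and concatenate with the former to obtain a subnormal cyclic series of $G$. Since pullback along a quotient preserves the isomorphism type of each factor, counting the infinite cyclic factors of the concatenated series yields $h(N) + h(G/N)$, which must equal $h(G)$ by well-definedness of the Hirsch length.

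For (1), the plan is to intersect a fixed subnormal cyclic series $G = G_0 \rhd \ldots \rhd G_k = 1$ with $H$ and set $H_i := H \cap G_i$. Each quotient $H_i/H_{i+1}$ embeds into $G_i/G_{i+1}$ via $hH_{i+1} \mapsto hG_{i+1}$, so it is cyclic, and it is infinite cyclic only if $G_i/G_{i+1}$ is. Counting infinite cyclic factors along both series then gives $h(H) \leq h(G)$.

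The equality characterisation exploits the same intersection. If $[G:H] < \infty$, then $[G_i : H_i] \leq [G:H]$ is finite for every $i$, so each $H_i/H_{i+1}$ has finite index in $G_i/G_{i+1}$; whenever $G_i/G_{i+1} \cong \Z$, the quotient $H_i/H_{i+1}$ is then a nontrivial subgroup of $\Z$, hence also infinite cyclic, and counting yields $h(H) = h(G)$. Conversely, if $h(H) = h(G)$, the counting bound from the previous paragraph is saturated, so every infinite cyclic $G$-factor must correspond to an infinite cyclic $H$-factor; combined with the trivial case of finite $G$-factors, this makes each $H_i/H_{i+1}$ of finite index in $G_i/G_{i+1}$. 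A short induction using the inequality $[G_i : H_i] \leq [G_i/G_{i+1} : H_iG_{i+1}/G_{i+1}] \cdot [G_{i+1} : H_{i+1}]$ then forces $[G : H] < \infty$.

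The only nontrivial background ingredient is the well-definedness of $h(G)$, which I would simply cite from a standard reference on polycyclic groups; once this is granted, everything reduces to elementary bookkeeping on subnormal series, and I do not anticipate any conceptual obstacle.
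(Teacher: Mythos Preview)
The paper does not actually prove this lemma; it merely cites it as \cite[Exercise~8]{Segal83} and moves on. Your argument is correct and is the standard one: for (2) you splice together series for \(N\) and \(G/N\) via the correspondence theorem, and for (1) you intersect a subnormal cyclic series of \(G\) with \(H\), use the embedding \(H_i/H_{i+1} \hookrightarrow G_i/G_{i+1}\) to bound the number of infinite factors, and then run the index multiplicativity \([G_i:H_i] = [G_i:H_iG_{i+1}]\cdot[G_{i+1}:H_{i+1}]\) (you wrote \(\leq\), but in fact equality holds by the second isomorphism theorem) inductively to characterise equality. The only tacit inputs are that subgroups and quotients of polycyclic groups are again polycyclic, which you need in order to speak of \(h(N)\), \(h(G/N)\), and \(h(H)\); you may want to mention this explicitly, but otherwise nothing is missing.
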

\subsubsection{Group homomorphisms}
The following can be seen as a Fitting lemma for \fgtf nilpotent groups.
\begin{lemma}	\label{lem:DirectDecompositionDueToEndomorphismFGTFNilpotentGroup}
	Let \(N\) be a \fgtf nilpotent group. Let \(\phi: N \to N\) be an endomorphism. Then there exists a \(k \in \N\) such that \(\ker \phi^{k} = \ker \phi^{k + n}\) for all \(n \geq 0\). Moreover, for such \(k\), \(\ker \phi^{k} \cap \im \phi^{k} = 1\). In particular, if \(\im \phi^{k}\) is normal in \(N\), then \(\im \phi^{k} \ker \phi^{k} \cong \im \phi^{k} \times \ker \phi^{k}\).
\end{lemma}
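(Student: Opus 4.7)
The plan is to establish the three assertions in sequence, leaning on the fact that a \fgtf nilpotent group is polycyclic. For the stabilisation of \(\ker \phi^{k}\), I would observe that \(N\) satisfies the ascending chain condition on subgroups, since every subgroup of a polycyclic group is again polycyclic and in particular finitely generated. Applied to the chain \(\ker \phi \leq \ker \phi^{2} \leq \ldots\), this immediately produces the desired \(k\). As a self-contained alternative using tools already introduced in the excerpt, the sequence \(h(\ker \phi^{k})\) is non-decreasing and bounded above by \(h(N)\), hence eventually constant; by \cref{lem:HirschLengthProperties} consecutive terms then have finite index in one another, and since \(\ker \phi^{k+1}/\ker \phi^{k}\) embeds into \(\ker \phi\) via \(x \mapsto \phi^{k}(x)\) and is therefore torsion-free, finite index is upgraded to equality.

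For the trivial intersection, given \(x \in \ker \phi^{k} \cap \im \phi^{k}\) I would write \(x = \phi^{k}(y)\) and compute \(\phi^{2k}(y) = \phi^{k}(x) = 1\); hence \(y \in \ker \phi^{2k} = \ker \phi^{k}\) by the previous step, so \(x = \phi^{k}(y) = 1\).

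For the final assertion, note that \(\ker \phi^{k}\) is always normal in \(N\), and by hypothesis so is \(\im \phi^{k}\). For any two normal subgroups \(A, B \normal N\) with \(A \cap B = 1\), the standard two-sided commutator argument \([a,b] = (aba^{-1})b^{-1} \in B\) and \([a,b] = a(ba^{-1}b^{-1}) \in A\) yields \([A, B] \leq A \cap B = 1\); hence \(\im \phi^{k}\) and \(\ker \phi^{k}\) commute elementwise, and combined with trivial intersection this gives \(\im \phi^{k}\ker \phi^{k} \cong \im \phi^{k} \times \ker \phi^{k}\). I do not anticipate a real obstacle; the only subtle point is turning mere stability of Hirsch length into equality of kernels, which is why the direct appeal to the Noetherian property of polycyclic groups feels cleaner than the Hirsch-length route.
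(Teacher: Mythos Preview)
Your proposal is correct and follows essentially the same approach as the paper: the paper uses the Noetherian property of finitely generated nilpotent groups to stabilise the ascending chain of kernels and then gives exactly your argument for the trivial intersection. You additionally spell out the standard commutator argument for the final direct-product assertion, which the paper leaves implicit, and you offer a Hirsch-length alternative for the first step that the paper does not mention; both additions are fine but not needed.
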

\begin{proof}
	Put, for each \(i \in \N_{0}\), \(K_{i} := \ker \phi^{i}\). Then clearly \(K_{i} \leq K_{i + 1}\) for each \(i\). Therefore,
	\[
		K_{1} \leq K_{2} \leq K_{3} \leq \ldots
	\]
	is a non-decreasing chain of subgroups. Since \(N\) is finitely generated and nilpotent, this chain must stabilise at some index, say \(k\). Thus, for each \(n \geq 0\), we have \(K_{k} = K_{k + n}\).
	
	Next, suppose that \(x \in \ker \phi^{k} \cap \im \phi^{k}\). Write \(x = \phi^{k}(y)\). Then \(\phi^{2k}(y) = \phi^{k}(x) = 1\), which implies that \(y \in K_{2k} = K_{k}\). Therefore, \(x = \phi^{k}(y) = 1\).
\end{proof}
Given a group \(G\) and a subgroup \(H\) of \(G\), we define the \emph{isolator} of \(H\) to be the set
\[
	\sqrt[G]{H} = \{g \in G \mid \exists n \in \N_{0}: g^{n} \in H\}.
\]
In general, \(\sqrt[G]{H}\) is not a subgroup. However, if \(G\) is a \fgtf nilpotent group, then \(\sqrt[G]{H}\) is a subgroup for every \(H \leq G\) (see \eg \cite[p.~249]{Kurosh60}). Given groups \(G_{1}, \ldots, G_{n}\) with respective subgroups \(H_{1}, \ldots, H_{n}\), it follows from the definition that, with \(G = \Times\limits_{i = 1}^{n} G_{i}\) and \(H = \Times\limits_{i = 1}^{n} H_{i}\),
\begin{equation}	\label{eq:isolatorDirectProducts}
	\sqrt[G]{H} = \Times_{i = 1}^{n} \sqrt[G_{i}]{H_{i}}.
\end{equation}
\begin{lemma}	\label{lem:isolatorCommutatorInKernelToTorsionfreeAbelian}
	Let \(G\) and \(A\) be two groups with \(A\) torsion-free abelian and let \(\phi: G \to A\) be a homomorphism. Then \(\sqrt[G]{\gamma_{2}(G)} \leq \ker \phi\).
\end{lemma}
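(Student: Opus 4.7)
The proof should be essentially a two-line argument, and the plan is as follows. Since $A$ is abelian, every commutator in $G$ is sent to the identity, so $\gamma_{2}(G) \leq \ker \phi$. This immediately reduces the problem to showing that $\ker \phi$ is closed under taking roots, i.e.\ that $\sqrt[G]{\ker \phi} \leq \ker \phi$, which is where the torsion-freeness of $A$ enters.

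Concretely, I would take an arbitrary $g \in \sqrt[G]{\gamma_{2}(G)}$, pick $n \in \N_{0}$ with $g^{n} \in \gamma_{2}(G)$, and compute
\[
	\phi(g)^{n} = \phi(g^{n}) = 1,
\]
using the first observation in the last equality. Since $A$ is torsion-free, this forces $\phi(g) = 1$, so $g \in \ker \phi$, which is the desired inclusion.

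There is no real obstacle here: the only two ingredients are the universal property of the abelianisation (to get $\gamma_{2}(G) \leq \ker \phi$) and the absence of torsion in $A$ (to promote $\phi(g)^{n} = 1$ to $\phi(g) = 1$). I do not anticipate needing any of the nilpotent-group machinery developed earlier in the subsection, nor the fact that $\sqrt[G]{\gamma_{2}(G)}$ is itself a subgroup; the argument works element-by-element.
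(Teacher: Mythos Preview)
Your proposal is correct and matches the paper's proof essentially line for line: the paper also first notes \(\gamma_{2}(G) \leq \ker \phi\) since \(A\) is abelian, then takes \(g\) with \(g^{k} \in \gamma_{2}(G)\), observes \(\phi(g)^{k} = 1\), and uses torsion-freeness of \(A\) to conclude \(\phi(g) = 1\). The only cosmetic difference is that the paper writes \(A\) additively.
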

\begin{proof}
	Since \(A\) is abelian, \(\gamma_{2}(G) \leq \ker \phi\). Now, let \(g \in G\) and \(k \in \N_{0}\) be such that \(g^{k} \in \gamma_{2}(G)\). Then \(\phi(g^{k}) = 0\), hence \(k\phi(g) = 0\) (we write \(A\) additively), so by torsion-freeness of \(A\), \(\phi(g) = 0\). Therefore, \(g \in \ker \phi\), which proves the lemma.
\end{proof}
The following lemma is a special case of \cite[Theorem~1]{Wehrfritz10}.
\begin{lemma}	\label{lem:endomorphsimsInducingAutomorphismOnCenterIsAutomorphismFGTFNilpotent}
Let \(N\) be a \fgtf nilpotent group. Let \(\phi \in \End(N)\). If \(\phi\) restricts to an automorphism of \(Z(N)\), then \(\phi\) is an automorphism of \(N\).
\end{lemma}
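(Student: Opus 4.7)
My plan is to establish injectivity and surjectivity of $\phi$ separately; the former is quick, while the latter reduces, by induction on the nilpotency class, to an auxiliary statement about the second term of the upper central series.

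For injectivity, I would suppose $\ker \phi \neq 1$. Being a non-trivial normal subgroup of the nilpotent group $N$, it must intersect $Z(N)$ non-trivially. But \restr{\phi}{Z(N)} is an automorphism, hence injective, so $\ker \phi \cap Z(N) = 1$---a contradiction. Thus $\phi$ is injective and restricts to an isomorphism $N \to \phi(N)$, whence $h(\phi(N)) = h(N)$ and \cref{lem:HirschLengthProperties} gives $[N : \phi(N)] < \infty$.

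Next I would show $Z(N) \leq \phi(N)$. By \cref{lem:centralisersFiniteIndexSubgroupsFGTFNilpotentGroups}, $C_N(\phi(N)) = Z(N)$, whence $Z(\phi(N)) \leq Z(N)$. Since $\phi$ is an isomorphism onto $\phi(N)$, $\phi(Z(N)) = Z(\phi(N))$, and together with the hypothesis this forces $Z(\phi(N)) = Z(N)$, so in particular $Z(N) \leq \phi(N)$. Now I would induct on the nilpotency class $c$ of $N$. The case $c = 1$ is immediate. For $c \geq 2$, $\phi$ induces an endomorphism $\bar \phi$ on the \fgtf nilpotent group $N/Z(N)$ of class $c - 1$; this $\bar \phi$ is injective, since $\bar \phi(\bar x) = 1$ yields $\phi(x) \in Z(N) = \phi(Z(N))$, hence $x \in Z(N)$ by injectivity of $\phi$. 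If \restr{\bar \phi}{Z(N/Z(N))} were known to be an automorphism, the induction hypothesis would give $\bar \phi \in \Aut(N/Z(N))$, and hence $\phi(N)Z(N) = N$; combined with $Z(N) \leq \phi(N)$, this would yield $\phi(N) = N$.

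The main obstacle is confirming that \restr{\bar \phi}{Z(N/Z(N))} is surjective, equivalently that $\phi(Z_2(N)) \cdot Z(N) = Z_2(N)$. My approach would be to exploit the bilinear commutator pairing $N/Z(N) \times Z_2(N)/Z(N) \to Z(N)$ together with its $\phi$-equivariance: since \restr{\phi}{Z(N)} is an automorphism of a torsion-free abelian group and $\phi$ has finite-index image, this pairing places strong constraints on the induced endomorphism of $Z_2(N)/Z(N)$. Alternatively, one can pass to the rational Malcev completion $N_{\Q}$, where the analogous Lie-algebra statement reduces to a dimension count, and then descend back to $N$ via the finite-index information; this is essentially the route taken in \cite{Wehrfritz10}.
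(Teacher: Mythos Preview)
The paper does not prove this lemma at all: it simply cites \cite[Theorem~1]{Wehrfritz10}. Your proposal is therefore more ambitious than the paper, and the parts you do carry out are correct and pleasant: the injectivity argument via $\ker\phi\cap Z(N)$, the finite-index conclusion from Hirsch length, and the identification $Z(\phi(N))=Z(N)$ via \cref{lem:centralisersFiniteIndexSubgroupsFGTFNilpotentGroups} are all fine.

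However, your proof is not complete. You correctly isolate the crux --- showing that $\bar\phi$ restricts to an automorphism of $Z(N/Z(N))=Z_{2}(N)/Z(N)$ --- but you do not actually prove it. What you have established so far only gives that $\bar\phi$ is an injective endomorphism of the free abelian group $Z_{2}(N)/Z(N)$ with finite-index image; surjectivity does not follow automatically. Your two suggested routes (the commutator pairing $N/Z(N)\times Z_{2}(N)/Z(N)\to Z(N)$, or passage to the Malcev completion) are both viable in principle, but each requires a genuine argument that you have not supplied. For the pairing route one must exploit that the pairing is non-degenerate on the right together with the fact that $\restr{\phi}{Z(N)}$ is bijective and $\bar\phi$ has finite-index image to force the induced map on $Z_{2}/Z$ to be surjective over $\Z$, not merely over $\Q$; this is exactly the subtle point. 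For the Malcev route one must check that surjectivity at the rational level descends to $N$, which again uses the hypothesis on $Z(N)$ in an essential way. Since you end by invoking \cite{Wehrfritz10} for this step, your write-up ultimately rests on the same citation the paper uses, so as a standalone proof it has a genuine gap at the decisive point.
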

We also need the so-called (short) five lemma for groups.
\begin{lemma}	\label{lem:shortFiveLemmaGroups}
	Let \(G_{1}\) and \(G_{2}\) be groups with respective normal subgroup \(N_{1}\) and \(N_{2}\). Let \(\phi: G_{1} \to G_{2}\) be a morphism such that \(\phi(N_{1}) \leq N_{2}\). Let \(\phi_{N_{1}}: N_{1} \to N_{2}\) and \(\bar{\phi}: G_{1} / N_{1} \to G_{2} / N_{2}\) denote the induced homomorphisms. Consider the following commuting diagram with exact rows:
	\[
		\begin{tikzcd}
			1 \ar[r]	&	N_{1} \ar[r] \ar{d}{\phi_{N_{1}}}	&	G_{1} \ar[r] \ar{d}{\phi}	&	G_{1} / N_{1} \ar[r] \ar{d}{\bar{\phi}}	&	1	\\
			1 \ar[r]	&	N_{2} \ar[r] 		&	G_{2} \ar[r]	&	G_{2} / N_{2} \ar[r]	&	1
		\end{tikzcd}
	\]
	Suppose that \(\phi_{N_{1}}\) and \(\bar{\phi}\) are isomorphisms. Then \(\phi\) is an isomorphism as well.
\end{lemma}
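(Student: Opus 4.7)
The plan is to carry out the standard diagram chase, treating injectivity and surjectivity separately, since no subtlety of the nilpotent setting is used here.

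For injectivity, I would start with an arbitrary \(g \in G_{1}\) satisfying \(\phi(g) = 1\). Projecting to the quotients, I obtain \(\bar{\phi}(gN_{1}) = N_{2}\), and injectivity of \(\bar{\phi}\) forces \(g \in N_{1}\). Then \(\phi_{N_{1}}(g) = \phi(g) = 1\), and injectivity of \(\phi_{N_{1}}\) yields \(g = 1\).

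For surjectivity, I would take an arbitrary \(h \in G_{2}\) and first use that \(\bar{\phi}\) is surjective to find \(g \in G_{1}\) with \(\bar{\phi}(gN_{1}) = hN_{2}\), i.e.\ \(\phi(g)N_{2} = hN_{2}\). Then \(\inv{\phi(g)}h \in N_{2}\), and by surjectivity of \(\phi_{N_{1}}\) there is an \(n \in N_{1}\) such that \(\phi_{N_{1}}(n) = \inv{\phi(g)}h\). Commutativity of the left square gives \(\phi(n) = \inv{\phi(g)}h\), hence \(h = \phi(gn)\), as required.

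There is essentially no obstacle here; the only thing to watch is that the argument is written multiplicatively (since \(G_{1}, G_{2}\) need not be abelian) and that one uses only that \(N_{i}\) is a normal subgroup in order to form the quotients and speak of cosets meaningfully. No assumption beyond the hypotheses of the lemma is needed.
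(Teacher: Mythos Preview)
Your proof is correct and is exactly the standard diagram chase for the short five lemma. The paper does not actually supply a proof of this lemma; it is stated as a well-known fact without argument, so there is nothing to compare against beyond noting that your write-up is the expected one.
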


The following technical result is proven by J.\ Bidwell \cite[Lemma~2.4]{Bidwell08} in the context of direct products of finite groups. His proof, however, does not use the finiteness of the groups, so it works for arbitrary groups.
\begin{lemma}[{See \eg \cite[Lemma~2.4]{Bidwell08}}]	\label{lem:imagesSigmaijNormal}	
	Let \(G_{1}, \ldots, G_{k}\) be groups and put \(G = \Times\limits_{i = 1}^{k} G_{i}\). Let \(\phi = (\phi_{ij})_{ij} \in \Aut(G)\) and write \(\inv{\phi} = (\phi'_{ij})_{ij}\). For \(i, j \in \{1, \ldots, k\}\), put
	\[
		\sigma_{i, j} = \sum_{\substack{l = 1 \\ l \ne j}}^{k} \phi'_{i, l} \phi_{l, i} \in \End(G_{i})
	\]
	and
	\[
		\sigma'_{i, j} = \sum_{\substack{l = 1 \\ l \ne j}}^{k} \phi_{i, l} \phi'_{l, i} \in \End(G_{i}).
	\]
	Then for all \(i, j \in \{1, \ldots, k\}\) and \(t \geq 0\), the subgroups \(\im(\sigma_{i, j}^{t})\) and \(\im\left( (\sigma'_{i, j})^{t}\right)\) are normal in \(G_{i}\).
\end{lemma}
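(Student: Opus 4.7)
The plan is to leverage the matrix identity $\inv{\phi}\phi = \Id$ (respectively $\phi\inv{\phi} = \Id$) in the monoid $\M$. Computing the $(i,i)$-entry of $\inv{\phi}\phi$ gives
\[
	\Id_{G_{i}} \;=\; \sum_{l=1}^{k}\phi'_{i,l}\phi_{l,i} \;=\; \sigma_{i,j} + \phi'_{i,j}\phi_{j,i},
\]
so every $h \in G_{i}$ decomposes as $h = \sigma_{i,j}(h)\cdot \phi'_{i,j}\phi_{j,i}(h)$. Because $\inv{\phi} \in \M$, the subgroups $\im\phi'_{i,l}$ pairwise commute as $l$ varies, and I would first record the two consequences of this: (a) $\sigma_{i,j}$ is itself an endomorphism of $G_{i}$ (a sum of homomorphisms with pairwise commuting images is a homomorphism), and (b) $\im\sigma_{i,j}$ lies in the subgroup generated by $\{\im\phi'_{i,l} \mid l \ne j\}$, which commutes elementwise with $\im\phi'_{i,j} \supseteq \im(\phi'_{i,j}\phi_{j,i})$.

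Having set this up, the core step is the case $t = 1$. For arbitrary $g,h \in G_{i}$, I would compute
\[
	h\,\sigma_{i,j}(g)\,h^{-1} \;=\; \sigma_{i,j}(h)\,\phi'_{i,j}\phi_{j,i}(h)\,\sigma_{i,j}(g)\,\phi'_{i,j}\phi_{j,i}(h)^{-1}\,\sigma_{i,j}(h)^{-1}.
\]
By (b), $\phi'_{i,j}\phi_{j,i}(h)$ commutes with $\sigma_{i,j}(g)$, so the two central occurrences of $\phi'_{i,j}\phi_{j,i}(h)^{\pm 1}$ cancel, leaving $\sigma_{i,j}(h)\sigma_{i,j}(g)\sigma_{i,j}(h)^{-1}$. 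Using (a), this equals $\sigma_{i,j}(hgh^{-1}) \in \im \sigma_{i,j}$, establishing normality of $\im\sigma_{i,j}$.

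For $t \geq 2$ I would induct on $t$. Applying the $t=1$ identity to the element $\sigma_{i,j}^{t-1}(g) \in \im\sigma_{i,j}$ yields
\[
	h\,\sigma_{i,j}^{t}(g)\,h^{-1} \;=\; \sigma_{i,j}\bigl(h\,\sigma_{i,j}^{t-1}(g)\,h^{-1}\bigr),
\]
and the induction hypothesis gives $h\,\sigma_{i,j}^{t-1}(g)\,h^{-1} = \sigma_{i,j}^{t-1}(g')$ for some $g' \in G_{i}$, hence $h\sigma_{i,j}^{t}(g)h^{-1} = \sigma_{i,j}^{t}(g') \in \im\sigma_{i,j}^{t}$. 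The case $t=0$ is trivial because $\sigma_{i,j}^{0} = \Id_{G_{i}}$. The statement for $\sigma'_{i,j}$ follows by the entirely symmetric argument, starting from $\phi\inv{\phi} = \Id$ and using that the images $\im\phi_{i,l}$ pairwise commute as $l$ varies, which holds since $\phi \in \M$.

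The only obstacle is making the commuting-images bookkeeping fully airtight, in particular justifying that $\sigma_{i,j}$ is a homomorphism and that the unwanted factors cancel in the conjugation computation; once (a) and (b) are nailed down, the rest is mechanical.
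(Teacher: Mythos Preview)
Your proof is correct. The paper does not give its own proof of this lemma; it simply cites Bidwell \cite[Lemma~2.4]{Bidwell08} and remarks that his argument does not use finiteness, so your argument (which is essentially the natural one and presumably close to Bidwell's) fills in exactly what the paper omits. One minor simplification: your identity $h\,\sigma_{i,j}(g)\,h^{-1} = \sigma_{i,j}(hgh^{-1})$, valid for all $g,h \in G_{i}$, already says that conjugation by $h$ commutes with $\sigma_{i,j}$ as maps, so iterating gives $h\,\sigma_{i,j}^{t}(g)\,h^{-1} = \sigma_{i,j}^{t}(hgh^{-1}) \in \im\sigma_{i,j}^{t}$ immediately, without a separate induction.
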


\subsubsection{Rational completion}

\begin{defin}
	Let \(N\) be a \fgtf nilpotent group. The \emph{rational Malcev completion}, or rational completion for short, of \(N\) is the unique (up to isomorphism) torsion-free nilpotent group \(\malclos{N}\) satisfying the following properties (see \eg \cite{BaumslagMillerOstheimer16}, \cite[Chapter~6]{Segal83}):
	\begin{enumerate}[1)]
		\item \(N\) embeds into \(\malclos{N}\);
		\item for all \(n \in N\) and \(k \in \N_{0}\), there exists a unique \(m \in \malclos{N}\) such that \(m^{k} = n\);
		\item for all \(n \in \malclos{N}\), there exists a \(k \in \N_{0}\) such that \(n^{k} \in N\).
	\end{enumerate}
\end{defin}
We say that a group is \emph{directly indecomposable} if \(G \cong H \times K\) implies \(H = 1\) or \(K = 1\). Following \cite{Baumslag75}, we call a \fgtf nilpotent group \(N\) \emph{rationally indecomposable} if \(\malclos{N}\) is directly indecomposable. Examples of rationally indecomposable groups include \fgtf nilpotent groups with cyclic centre (see \eg \cite[Lemma~2]{Baumslag75}).
\begin{prop}[{See \eg \cite[Proposition~5]{BaumslagMillerOstheimer16}}]	\label{prop:decomposablefgtfNilpotentGroupHasDecomposableMalcevCompletion}
	Let \(N\) be a \fgtf nilpotent group. Suppose that \(N = N_{1} \times N_{2}\) for some groups \(N_{1}\) and \(N_{2}\). Then \(\malclos{N} = \malclos{N}_{1} \times \malclos{N}_{2}\).
\end{prop}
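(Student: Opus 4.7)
The plan is to verify that \(\malclos{N}_1 \times \malclos{N}_2\) satisfies the three defining properties of the rational Malcev completion of \(N\), and then invoke the uniqueness clause in the definition to conclude that \(\malclos{N} = \malclos{N}_1 \times \malclos{N}_2\). Before doing so, I would briefly observe that both \(N_1\) and \(N_2\) are \fgtf nilpotent (as subgroups of \(N\), using that subgroups of finitely generated nilpotent groups are finitely generated), so that \(\malclos{N}_1\) and \(\malclos{N}_2\) exist in the first place, and their direct product is again torsion-free nilpotent.

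For property (1), the componentwise embeddings \(N_i \hookrightarrow \malclos{N}_i\) assemble into an embedding \(N = N_1 \times N_2 \hookrightarrow \malclos{N}_1 \times \malclos{N}_2\). For property (2), given \((n_1, n_2) \in N\) and \(k \in \N_{0}\), I would pick the unique \(m_i \in \malclos{N}_i\) with \(m_i^{k} = n_i\) supplied by the Malcev completion of each factor; the pair \((m_1, m_2)\) is then a \(k\)-th root of \((n_1, n_2)\), and any other such root agrees with it in each coordinate by componentwise uniqueness. For property (3), given \((m_1, m_2) \in \malclos{N}_1 \times \malclos{N}_2\), one gets integers \(k_1, k_2 \in \N_{0}\) with \(m_i^{k_i} \in N_i\), so taking \(k = k_1 k_2\) yields \((m_1, m_2)^{k} \in N_1 \times N_2 = N\).

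Since these three properties characterise \(\malclos{N}\) up to isomorphism by the definition, this forces \(\malclos{N} \cong \malclos{N}_1 \times \malclos{N}_2\). I don't anticipate a real obstacle here: the argument is a direct coordinatewise verification, and the only mildly subtle point is the uniqueness of \(k\)-th roots in the product, which comes for free from the uniqueness in each factor together with torsion-freeness. If one wanted to be extra careful, one could alternatively invoke the functoriality of the Malcev completion (any group homomorphism \(N \to \malclos{N}_1 \times \malclos{N}_2\) into a radicable torsion-free nilpotent group extends uniquely) to identify the two completions, but the direct check above is shorter.
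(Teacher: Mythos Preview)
Your argument is correct: the coordinatewise verification of the three defining properties of the Malcev completion goes through exactly as you describe, and uniqueness then yields the identification. Note, however, that the paper does not supply its own proof of this proposition; it is stated with a citation to \cite[Proposition~5]{BaumslagMillerOstheimer16} and used as a black box, so there is no in-paper argument to compare against. Your direct check is precisely the standard way one would prove this from the definition given in the paper.
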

\begin{prop}	\label{prop:DirectlyIndecomposableMalcevCompletionImpliesDirectlyIndecomposable}
	Let \(N\) be a \fgtf nilpotent group. Then \(N\) is rationally indecomposable if and only if every finite index subgroup of \(N\) is directly indecomposable.
	
	In particular, every finite index subgroup of a rationally indecomposable group is itself rationally indecomposable.
\end{prop}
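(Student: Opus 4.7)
The plan is to prove the two implications of the equivalence separately, using the uniqueness of the rational Malcev completion and \cref{prop:decomposablefgtfNilpotentGroupHasDecomposableMalcevCompletion}. The ``in particular'' statement is then a formal consequence: if $N$ is rationally indecomposable and $H$ has finite index in $N$, then every finite index subgroup $K$ of $H$ has finite index in $N$ too, hence is directly indecomposable by the forward implication applied to $N$; the backward implication applied to $H$ then gives that $H$ is rationally indecomposable.

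For the forward direction, I would argue contrapositively. Suppose some finite index subgroup $H$ of $N$ splits as $H = H_{1} \times H_{2}$ with both factors non-trivial. The first step is to show that $\malclos{H} \cong \malclos{N}$. To see this, I observe that $H \leq N \leq \malclos{N}$, the group $\malclos{N}$ is torsion-free nilpotent and uniquely divisible, every element of $\malclos{N}$ has a positive power in $N$ (property 3 of the completion of $N$), and every element of $N$ has a positive power in $H$ (since $[N:H] < \infty$). Composing these, $\malclos{N}$ satisfies the three defining properties of $\malclos{H}$, so by uniqueness $\malclos{N} \cong \malclos{H}$. Then \cref{prop:decomposablefgtfNilpotentGroupHasDecomposableMalcevCompletion} gives $\malclos{N} \cong \malclos{H}_{1} \times \malclos{H}_{2}$, and since each $H_{i}$ embeds into $\malclos{H}_{i}$, both factors are non-trivial, so $\malclos{N}$ is decomposable.

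For the backward direction, again I argue contrapositively. Assume $\malclos{N} = M_{1} \times M_{2}$ with both $M_{i}$ non-trivial. Set $H_{i} := N \cap M_{i}$. Since $M_{1}$ and $M_{2}$ commute elementwise inside $\malclos{N}$, the product $H_{1}H_{2}$ is internally $H_{1} \times H_{2}$ and is a subgroup of $N$. Each $H_{i}$ is non-trivial: picking any non-trivial $m \in M_{i}$, property 3 of the completion yields $k \in \N_{0}$ with $m^{k} \in N \cap M_{i} = H_{i}$, and $m^{k} \neq 1$ by torsion-freeness. The key point is that $H_{1} \times H_{2}$ has \emph{finite} index in $N$: given $n \in N$, write $n = m_{1}m_{2}$ with $m_{i} \in M_{i}$, and choose $k \in \N_{0}$ such that $m_{1}^{k}, m_{2}^{k} \in N$ (possible because $\malclos{N} / N$ is a torsion group on each factor); then $n^{k} = m_{1}^{k}m_{2}^{k} \in H_{1}H_{2}$. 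Hence $N / (H_{1} \times H_{2})$ is a torsion quotient of a polycyclic group and therefore finite. This exhibits a finite index subgroup of $N$ with a non-trivial direct decomposition, contradicting the hypothesis.

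The main technical obstacle is the verification that $\malclos{H} \cong \malclos{N}$ for $H$ of finite index in $N$ in the forward direction, but this is immediate from the uniqueness characterisation of the rational completion once the three defining properties are checked. Everything else reduces to elementary manipulations with direct products, isolators, and the Hirsch length if one wants to double-check finiteness of $N / (H_{1} \times H_{2})$ via \cref{lem:HirschLengthProperties} rather than by the torsion argument above.
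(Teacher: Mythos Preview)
Your argument is correct and essentially identical to the paper's proof: both directions are handled contrapositively, the forward direction verifies that $\malclos{N}$ is the rational completion of any finite index subgroup and then invokes \cref{prop:decomposablefgtfNilpotentGroupHasDecomposableMalcevCompletion}, and the backward direction intersects $N$ with the two factors of $\malclos{N}$ and shows their product is a decomposable finite index subgroup. The only cosmetic difference is that the paper cites \cite[Lemma~2.8]{Baumslag71a} for the finite index conclusion, whereas you phrase it as ``torsion quotient of a polycyclic group is finite'' (which is legitimate since $H_{1}$ and $H_{2}$, being intersections of $N$ with normal subgroups of $\malclos{N}$, are normal in $N$).
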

\begin{proof}
	First, assume that \(N\) is rationally indecomposable. Let \(H \leq N\) be a subgroup of finite index. We argue that \(\malclos{N}\) is also the rational completion of \(H\). The result then follows from \cref{prop:decomposablefgtfNilpotentGroupHasDecomposableMalcevCompletion}.
	
	Clearly, \(H\) embeds into \(\malclos{N}\), as it is a subgroup of \(N\). Next, let \(h \in H\) and \(k \in \N_{0}\). Since \(H \leq N\) and \(\malclos{N}\) is the rational completion of \(N\), there is a unique \(n \in \malclos{N}\) such that \(h = n^{k}\).
	
	Finally, let \(n \in \malclos{N}\). Then there is a \(k \in \N_{0}\) such that \(n^{k} \in N\). Since \(H\) has finite index in \(N\), there is an \(m \in \N_{0}\) such that \((n^{k})^{m} \in H\). Therefore, \(n^{km} \in H\), which finishes the proof that \(\malclos{N}\) is the rational completion of \(H\). \newline
	
	Conversely, suppose that \(\malclos{N} = A \times B\) is a non-trivial decomposition. Define \(N_{A} = A \cap N\) and \(N_{B} = B \cap N\). We prove that neither \(N_{A}\) nor \(N_{B}\) is trivial, that \(H := \grpgen{N_{A} N_{B}}\) is (isomorphic to) \(N_{A} \times N_{B}\) and that \(H\) has finite index in \(N\).
	
	Let \(a \in A\) be non-trivial. As \(a \in \malclos{N}\), there is a \(k \geq 1\) such that \(a^{k} \in N\). Consequently, \(a^{k} \in N_{A}\). As \(\malclos{N}\) is torsion-free, \(a^{k} \ne 1\), which proves that \(N_{A}\) is non-trivial. A similar argument holds for \(N_{B}\). 
	
	By construction, \(N_{A} \cap N_{B} = N \cap A \cap B = 1\). Also, the groups \(N_{A}\) and \(N_{B}\) commute, as \([N_{A}, N_{B}] \leq [A, B] = 1\). Thus, \(H = N_{A} \times N_{B}\). We are left with proving that \(H\) has finite index in \(N\). Let \(n \in N\). Then \(n = ab\) for some \(a \in A\) and \(b \in B\). By the properties of the Malcev completion, there exists a \(k \geq 1\) such that \(a^{k}, b^{k} \in N\). Then \(a^{k} \in N_{A}\) and \(b^{k} \in N_{B}\). Since \(a\) and \(b\) commute, \((ab)^{k} = a^{k}b^{k} \in N_{A} \times N_{B} = H\). Thus, some power of \(n\) lies in \(H\). This holds for arbitrary \(n\), so by \cite[Lemma~2.8]{Baumslag71a}, \(H\) has finite index in \(N\).
	
	Finally, let \(N\) be rationally indecomposable and \(H\) a finite index subgroup. Let \(K\) be a finite index subgroup of \(H\). Since \([N : K] = [N : H][H : K]\), \(K\) has finite in \(N\) and is therefore directly indecomposable. As \(K\) was arbitrary, every finite index subgroup of \(H\) is directly indecomposable. It follows that \(H\) is rationally indecomposable.
	\end{proof}
\begin{remark}
	There exist \fgtf directly (but not rationally) indecomposable nilpotent groups with a subgroup of finite index that does split as a non-trivial direct product. We provide an example based on \cite[\S~5]{Baumslag75} and \cite[\S~5]{BaumslagMillerOstheimer16}. Let \(A\) be the free abelian group on \(a, b\) and \(c\) and let \(B\) be the group \(A \rtimes_{\theta} \grpgen{t}\), where
	\[
		\theta(t): A \to A: a \mapsto ab, b \mapsto bc, c \mapsto c.
	\]
	Then \(B\) is finitely generated, torsion-free and 3-step nilpotent. Next, let \(F\) be the infinite cyclic group on \(f\) and put \(K = B \times F\). Fix an integer \(p \geq 2\). We define the group \(G_{p}\) to be the subgroup of \(\malclos{K}\) generated by \(K\) and the unique \(p\)-th root of \(bf\), which we call \(s\). It is proven in \cite[Lemma~3]{Baumslag75} that \(G_{p}\) is directly indecomposable (in their notation, \(G_{p} = G(1, p)\)). However, \(K\) is by construction a direct product.
	
	We now argue that \(K\) has finite index in \(G_{p}\). It is readily verified that \([t, \inv{s}]^{p} = c\). For ease of notation, we write \(c^{1/p} = [t, \inv{s}]\). Since \(c\) commutes with everything in \(K\), so does \(c^{1/p}\), as we work in a torsion-free nilpotent group. Analogously, \(s\) commutes with \(a, b, c\) and \(f\). In particular, \(\grpgen{f, c^{1/p}} \leq Z(G_{p})\). Using these observations and the rewriting rules
	coming from the definition of \(B\) and \(c^{1/p}\), we can rewrite any element in \(G_{p}\) in the form
	\[
		a^{\alpha}b^{\beta}f^{F}t^{T}s^{S}c^{C/p},
	\]
	where \(\alpha, \beta, F, T, S, C \in \Z\). Any right coset of \(K\) in \(G_{p}\) then has a representative of the form \(s^{S} c^{C/p}\). Moreover, since \(s^{p} = bf \in K\) and \(c \in K\), it is sufficient to take \(S, C \in \{0, \ldots, p - 1\}\) to obtain a full set of representatives of the right cosets of \(K\) in \(G_{p}\). Therefore, \([G_{p} : K] \leq p^{2}\).
\end{remark}

\begin{lemma}	\label{lem:NormalSubgroupsFactorGroupsRationalClosure}
	Let \(N\) be a \fgtf nilpotent group and let \(K \leq N\) be a normal subgroup.
	\begin{enumerate}[1)]
		\item The rational completion \(\malclos{K}\), seen as a subgroup of \(\malclos{N}\), is normal in \(\malclos{N}\).
		\item Suppose that \(N / K\) is torsion-free. Then \(\malclos{(N / K)} \cong \malclos{N} / \malclos{K}\)
	\end{enumerate}
\end{lemma}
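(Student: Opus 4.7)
My starting observation, used for both parts, is that $\malclos{K}$, viewed as a subgroup of $\malclos{N}$ via the canonical inclusion, coincides with the isolator
\[
	\sqrt[\malclos{N}]{K} = \{g \in \malclos{N} \mid \exists m \in \N_{0}: g^{m} \in K\}.
\]
Indeed, this isolator satisfies the three defining properties of the rational completion of $K$: $K$ embeds into it; each $k \in K$ has a unique $m$-th root there (the unique $m$-th root in $\malclos{N}$ automatically lies in the isolator, as its $m$-th power is $k \in K$); and each element has a power in $K$ by definition. Uniqueness of the rational completion then yields the identification.

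For Part~1, the easy half is that $N$ normalises $\malclos{K}$: if $n \in N$ and $x \in \malclos{K}$ with $x^{m} \in K$, then $(nxn^{-1})^{m} = nx^{m}n^{-1} \in nKn^{-1} = K$, so $nxn^{-1} \in \sqrt[\malclos{N}]{K}$. The main obstacle is extending this to arbitrary $g \in \malclos{N}$. The strategy I would take is to show that the normaliser $N_{\malclos{N}}(\malclos{K})$ is itself an isolated subgroup of $\malclos{N}$; since it contains $N$ and every element of $\malclos{N}$ has a power in $N$, isolatedness forces $N_{\malclos{N}}(\malclos{K}) = \malclos{N}$. To establish isolatedness, given $g \in \malclos{N}$ with $g^{m} \in N$, I would consider the subgroup $H := \bigcap_{i=0}^{m-1} g^{i} K g^{-i}$, which is directly verified to be invariant under conjugation by $g$ (using $g^{m}Kg^{-m} = K$). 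A Hirsch length argument via \cref{lem:HirschLengthProperties} should then show that $H$ and $K$ have the same isolator in $\malclos{N}$, so that conjugation by $g$ preserves $\malclos{K} = \sqrt[\malclos{N}]{H}$.

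For Part~2, once Part~1 gives $\malclos{K} \trianglelefteq \malclos{N}$, the quotient $\malclos{N}/\malclos{K}$ is a well-defined group and I would verify it is the rational completion of $N/K$ by checking the three defining properties. Torsion-free nilpotency follows from the isolator characterisation: if $g^{n} \in \malclos{K}$, then some $g^{nl} \in K$, hence $g \in \sqrt[\malclos{N}]{K} = \malclos{K}$. The natural map $nK \mapsto n\malclos{K}$ is a well-defined embedding $N/K \hookrightarrow \malclos{N}/\malclos{K}$: its kernel is $(N \cap \malclos{K})/K$, and torsion-freeness of $N/K$ forces $N \cap \malclos{K} = K$ (any element of the intersection is torsion in $N/K$, hence trivial). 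The remaining two properties are then routine: unique $k$-th roots in the quotient arise by lifting to the unique $k$-th root in $\malclos{N}$ (uniqueness descending from torsion-freeness of the quotient), and for any $g\malclos{K} \in \malclos{N}/\malclos{K}$ a choice of $k$ with $g^{k} \in N$ places $(g\malclos{K})^{k}$ in the image of $N/K$. Uniqueness of the rational completion yields $\malclos{(N/K)} \cong \malclos{N}/\malclos{K}$. The hardest step is Part~1, specifically the passage from normalisation by $N$ to normalisation by all of $\malclos{N}$; Part~2 is then a direct verification.
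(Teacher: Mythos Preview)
Your treatment of Part~2 is correct and mirrors the paper's argument: the paper also considers the natural map \(N/K \to \malclos{N}/\malclos{K}\), checks injectivity via torsion-freeness of \(N/K\), and then verifies the remaining two defining properties of the rational completion. Your additional remark that \(\malclos{N}/\malclos{K}\) is torsion-free (which the paper leaves implicit) is a nice clarification.

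For Part~1 the paper does not give an argument at all and simply cites Kurosh, so you are attempting more than the paper. Your framework is sound: the identification \(\malclos{K} = \sqrt[\malclos{N}]{K}\) is correct, the argument that \(N\) normalises \(\malclos{K}\) is correct, and the reduction to showing that the normaliser is isolated is the right idea. The gap is the step you flag as ``a Hirsch length argument via \cref{lem:HirschLengthProperties}''. Setting \(H = \bigcap_{i=0}^{m-1} g^{i}Kg^{-i}\), you need \([K:H] < \infty\), but Hirsch length alone does not give this: the conjugates \(g^{i}Kg^{-i}\) all have Hirsch length \(h(K)\), yet an intersection of subgroups of equal Hirsch length can have strictly smaller Hirsch length. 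There is no ambient polycyclic group in which both \(K\) and all the \(g^{i}Kg^{-i}\) have finite index (for instance, in \(\langle g, K\rangle\) the subgroup \(K\) typically has infinite index), so \cref{lem:HirschLengthProperties} does not apply in the way you suggest.

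What makes the step go through is a genuinely nilpotent phenomenon: conjugation by \(g\) acts unipotently on \(\malclos{N}\). Passing to the Lie algebra \(\mathfrak{n}\) via the Malcev correspondence, \(\malclos{K}\) corresponds to a subalgebra \(\mathfrak{k}\), and \(\mathrm{Ad}(g)\) is a unipotent linear operator with \(\mathrm{Ad}(g)^{m}(\mathfrak{k}) = \mathfrak{k}\). For a unipotent operator \(T\) over \(\Q\) one has \(\log T = \tfrac{1}{m}\log(T^{m})\), which expresses \(T\) as a polynomial in \(T^{m}\); hence \(T^{m}\)-invariance of \(\mathfrak{k}\) forces \(T\)-invariance. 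This is the missing ingredient, and it is what the reference to Kurosh is covering.
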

\begin{proof}
	For the first item, we refer the reader to \cite[p.~254]{Kurosh60}.
	
	For the second item, since \(N / K\) is torsion-free and finitely generated, its rational completion exists. Consider the map \(\phi: N\to \malclos{N} / \malclos{K}: n \mapsto n\malclos{K}\), which is the composition of the inclusion \(N \hookrightarrow \malclos{N}\) and the projection \(\malclos{N} \to \malclos{N} / \malclos{K}\). As \(K \leq \ker \phi\), it induces a homomorphism \(\Phi: N / K \to \malclos{N} / \malclos{K}\). We prove that \(\Phi\) is injective, and that \(\malclos{N} / \malclos{K}\) satisfies the properties of the rational completion for \(\im \Phi\).
	
	So, suppose that \(\Phi(nK) = \malclos{K}\) for some \(n \in N\). Then \(n \in \malclos{K}\). Hence, there is an \(m \in \N_{0}\) such that \(n^{m} \in K\). This means that \(n^{m}K = K\) in \(N / K\). As the latter is torsion-free, we conclude that \(nK = K\), which means that \(n \in K\). Hence, \(\Phi\) is injective.
	
	Next, let \(n \malclos{K} \in \im \Phi\) for some \(n \in N\) and let \(m \in \N_{0}\). Then there is a unique \(h \in \malclos{N}\) such that \(h^{m} = n\). Consequently, \((h\malclos{K})^{m} = n \malclos{K}\), so the second property is satisfied.
	
	Finally, let \(n\malclos{K} \in \malclos{N} / \malclos{K}\) for some \(n \in \malclos{N}\). Then there is an \(m \in \N_{0}\) such that \(n^{m} \in N\). Then \((n \malclos{K})^{m} \in \im \Phi\), so the third property is satisfied.
\end{proof}
\begin{prop}	\label{prop:NoAbelianFactorRationalClosureImpliesCentreInIsolatorCommutator}
	Let \(N\) be \fgtf nilpotent group. Suppose that \(\malclos{N}\) does not have an abelian direct factor. Then \(Z(N) \leq \sqrt[N]{\gamma_{2}(N)}\).
\end{prop}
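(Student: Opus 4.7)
The plan is to prove the contrapositive: assuming there exists $z \in Z(N) \setminus \sqrt[N]{\gamma_{2}(N)}$, I will construct a non-trivial abelian direct factor of $\malclos{N}$, contradicting the hypothesis.

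The first step is to lift $z$ into the centre of the Malcev completion. For any $y \in \malclos{N}$, some power $y^{k}$ lies in $N$ and thus commutes with $z \in Z(N)$; the torsion-free nilpotent argument used in the proof of \cref{lem:centralisersFiniteIndexSubgroupsFGTFNilpotentGroups} then forces $yz = zy$, and so $z \in Z(\malclos{N})$.

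Next I show that the image $\bar{z}$ of $z$ in the abelianisation $A_{N} := \malclos{N}/[\malclos{N},\malclos{N}]$ is non-zero. The quotient $N/\sqrt[N]{\gamma_{2}(N)}$ is torsion-free abelian, so \cref{lem:NormalSubgroupsFactorGroupsRationalClosure}(2) identifies its Malcev completion with $\malclos{N}/\malclos{\sqrt[N]{\gamma_{2}(N)}}$, which is therefore abelian. Hence $[\malclos{N},\malclos{N}] \leq \malclos{\sqrt[N]{\gamma_{2}(N)}}$, and if $z$ lay in $[\malclos{N},\malclos{N}]$, then some power $z^{k}$ would lie in $\sqrt[N]{\gamma_{2}(N)}$, forcing $z \in \sqrt[N]{\gamma_{2}(N)}$ by definition of the isolator — contradicting the choice of $z$.

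Since $\malclos{N}$ is torsion-free and divisible, $A_{N}$ is a $\Q$-vector space. I pick a $\Q$-complement $W$ of the line $\Q\bar{z}$ in $A_{N}$, let $H \leq \malclos{N}$ be its preimage (a normal subgroup containing $[\malclos{N},\malclos{N}]$), and let $A := \grpgen{z}^{\Q}$ denote the unique divisible hull of $\grpgen{z}$ inside $\malclos{N}$. The map $A \to A_{N}$ is injective with image $\Q\bar{z}$ (since $\bar{z} \neq 0$), so $A \cap H = 1$; the decomposition $A_{N} = \Q\bar{z} \oplus W$ together with the centrality of $A$ from the first step yields $AH = \malclos{N}$, whence $\malclos{N} = A \times H$ with $A$ a non-trivial abelian direct factor. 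This contradicts the hypothesis on $\malclos{N}$, so $z$ cannot exist and the proposition follows. The main obstacle is the third step, where one must verify that the central $\Q$-subgroup $A$ is genuinely a direct factor; this ultimately rests on $\bar{z} \neq 0$, which is precisely what the second step provides.
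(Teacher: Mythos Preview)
Your proposal is correct and follows essentially the same contrapositive strategy as the paper: both show that a central element \(z\) not in \(\sqrt[N]{\gamma_{2}(N)}\) remains non-trivial in a rational abelian quotient of \(\malclos{N}\), then split off the divisible hull \(\malclos{\grpgen{z}}\) as a central direct factor using a \(\Q\)-complement of the line it spans. The only cosmetic differences are that the paper works with the quotient \(\malclos{N}/\malclos{\sqrt[N]{\gamma_{2}(N)}}\) rather than the abelianisation \(\malclos{N}/[\malclos{N},\malclos{N}]\), and it invokes the known identity \(Z(\malclos{N}) = \malclos{Z(N)}\) instead of re-deriving \(z \in Z(\malclos{N})\) by hand.
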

\begin{proof}
	We proceed by contraposition. Let \(\pi: N \to \frac{N}{\sqrt[N]{\gamma_{2}(N)}}\) be the natural projection and suppose that \(z \in Z(N)\) is such that \(\pi(z) \ne 1\). By \cref{lem:NormalSubgroupsFactorGroupsRationalClosure}, we can consider the induced projection map \(\malclos{\pi}: \malclos{N} \to \malclos{N} / \malclos{\sqrt[N]{\gamma_{2}(N)}}\). Since \(\pi(z) \ne 1\), also \(\malclos{\pi}(z) \ne 1\). Indeed, suppose \(\malclos{\pi}(z) = 1\). Then \(z \in \malclos{\sqrt[N]{\gamma_{2}(N)}}\). Hence, \(z^{k} \in \sqrt[N]{\gamma_{2}(N)}\) for some \(k \geq 1\). Then \(\pi(z)^{k} = 1\), which implies that \(\pi(z) = 1\), as \(\frac{N}{\sqrt[N]{\gamma_{2}(N)}}\) is torsion-free.
	
	Now, since \(\malclos{N} / \malclos{\sqrt[N]{\gamma_{2}(N)}}\) is isomorphic to \(\malclos{(N / \sqrt[N]{\gamma_{2}(N)})}\) by \cref{lem:NormalSubgroupsFactorGroupsRationalClosure}, we see that \(\malclos{N} / \malclos{\sqrt[N]{\gamma_{2}(N)}}\) is abelian. Hence, it is isomorphic to \(\Q^{k}\) for some \(k \geq 1\). So, \(\malclos{\pi}(z)\) is part of a \(\Q\)-basis \(\mathcal{B}\) of \(\malclos{N} / \malclos{\sqrt[N]{\gamma_{2}(N)}}\).
	
	Finally, consider the composition
	\[
		\malclos{N} \to \malclos{N} / \malclos{\sqrt[N]{\gamma_{2}(N)}} \to \grpgen{\malclos{\pi}(z)}_{\Q},
	\]
	where the last group is the \(\Q\)-linear span of \(\malclos{\pi}(z)\) and where the last map sends \(\mathcal{B} \setminus \{\malclos{\pi}(z)\}\) to \(0\). Let \(\theta\) be this composition map. We claim that \(\malclos{N} \cong \ker \theta \times \malclos{\grpgen{z}}\). Clearly, \(\ker \theta\) is normal in \(\malclos{N}\). Since \(Z(\malclos{N}) = \malclos{Z(N)}\) (see \eg \cite[p.~257]{Kurosh60}), the subgroup \(\malclos{\grpgen{z}}\) lies in \(Z(\malclos{N})\), so it is central and hence normal. Finally, \(\ker \theta \cap \malclos{\grpgen{z}} = 1\) by construction, so \(\malclos{N}\) is the (internal) direct product of \(\ker \theta\) and \(\malclos{\grpgen{z}}\). The latter is abelian, which concludes the proof of the contraposition of the statement.
\end{proof}
\section{Main results}
\subsection{Automorphism group}
The goal of this section is to prove the following.
\begin{theorem}	\label{theo:AutomorphismGroupDirectProductFGTFNilpotentGroupsDirectyIndecomposableRationalClosure}
	Let \(N_{1}, \ldots, N_{k}\) be \fgtf non-abelian nilpotent groups such that \(N_{i}\) is rationally indecomposable for each \(i \in \{1, \ldots, k\}\). Put \(N = \Times\limits_{i = 1}^{k} N_{i}\). Let \(\phi = (\phi_{ij})_{ij} \in \Aut(N)\). Then the following hold:
	\begin{enumerate}[1)]
		\item For each \(i \in \{1, \ldots, k\}\), there is a unique \(\sigma(i) \in \{1, \ldots, k\}\) such that \(\phi_{i \sigma(i)}\) is an isomorphism.
		\item For each \(j \in \{1, \ldots, k\}\), there is a unique \(i \in \{1, \ldots, k\}\) such that \(\phi_{ij}\) is an isomorphism.
		\item For each \(i \in \{1, \ldots, k\}\) and for each \(j \in \{1, \ldots, k\}\) different from \(\sigma(i)\), we have \(\im \phi_{ij} \leq Z(N_{i})\).
	\end{enumerate}
\end{theorem}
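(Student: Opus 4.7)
My approach is to analyse the auxiliary endomorphisms $\sigma'_{i,j} = \sum_{l\ne j}\phi_{i,l}\phi'_{l,i} \in \End(N_i)$ from \cref{lem:imagesSigmaijNormal} via the Fitting decomposition of \cref{lem:DirectDecompositionDueToEndomorphismFGTFNilpotentGroup}, force a dichotomy using rational indecomposability, and then upgrade the resulting Malcev-level information to honest group isomorphisms via a unipotency argument on the centre.

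\emph{Step 1 (Dichotomy).} For any $(i,j)$, \cref{lem:imagesSigmaijNormal} tells us that $\im(\sigma'_{i,j})^t$ is normal in $N_i$ for every $t$, and \cref{lem:DirectDecompositionDueToEndomorphismFGTFNilpotentGroup} produces an $m$ with $\im(\sigma'_{i,j})^m \cap \ker(\sigma'_{i,j})^m = 1$ and $\im(\sigma'_{i,j})^m \cdot \ker(\sigma'_{i,j})^m \cong \im(\sigma'_{i,j})^m \times \ker(\sigma'_{i,j})^m$ inside $N_i$. Hirsch-length additivity (\cref{lem:HirschLengthProperties}) makes this product finite-index in $N_i$, so \cref{prop:DirectlyIndecomposableMalcevCompletionImpliesDirectlyIndecomposable} forbids the non-trivial direct decomposition of that finite-index subgroup: one of $\im(\sigma'_{i,j})^m$ or $\ker(\sigma'_{i,j})^m$ must be trivial. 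Each $\sigma'_{i,j}$ is therefore either (A) \emph{nilpotent}, or (B) \emph{injective with finite-index image}. Combined with the identity $\Id_{N_i} = \phi_{i,j}\phi'_{j,i} + \sigma'_{i,j}$ (a commuting-image sum), Case A forces the Malcev lift $\malclos{\phi_{i,j}\phi'_{j,i}}$ to be a unipotent -- hence invertible -- endomorphism of $\malclos{N_i}$; in particular, $\malclos{\phi_{i,j}}\colon \malclos{N_j}\to \malclos{N_i}$ is surjective.

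\emph{Step 2 (Defining $\sigma(i)$ and proving Part 3).} For each $i$, at most one $j$ can be in Case A: two nilpotent $\sigma'_{i,j_1}, \sigma'_{i,j_2}$ would produce two surjections $\malclos{\phi_{i,j_k}}\twoheadrightarrow \malclos{N_i}$ whose images commute, making $N_i$ abelian -- contradicting the hypothesis. Existence is the main obstacle; the key observation is that the images $V_l := \phi_{i,l}(N_l)$ are pairwise commuting normal subgroups of $N_i$ that together generate $N_i$ (\cref{lem:automorphismOfDirectProductImpliesNormalImages}) and whose pairwise intersections are central in $N_i$, so that if no $V_l$ equalled $\malclos{N_i}$ in the Malcev completion one could build from the $V_l$ a non-trivial direct-product decomposition of a suitable finite-index subgroup of $N_i$, contradicting \cref{prop:DirectlyIndecomposableMalcevCompletionImpliesDirectlyIndecomposable}. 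Let $\sigma(i)$ be the unique $j$ in Case A. For $l\ne \sigma(i)$, $\phi_{i,l}(N_l)$ commutes with the Malcev-surjective $\phi_{i,\sigma(i)}(N_{\sigma(i)})$, so $\malclos{\phi_{i,l}}(\malclos{N_l}) \subseteq Z(\malclos{N_i}) = \malclos{Z(N_i)}$; intersecting with $N_i$ (and using that $N_i/Z(N_i)$ is torsion-free) yields $\im \phi_{i,l}\subseteq Z(N_i)$, which is Part~3. Running the whole argument on $\phi^{-1}$ produces a companion function $\tau$, and since $\phi'_{\sigma(i),i}\phi_{i,\sigma(i)}$ is, on the Malcev completion, conjugate to the unipotent $\phi_{i,\sigma(i)}\phi'_{\sigma(i),i}$ via the Malcev isomorphism $\malclos{\phi_{i,\sigma(i)}}$, Case A holds symmetrically at $(\sigma(i),i)$ for $\phi^{-1}$, which gives $\tau\circ\sigma = \Id$ and hence Part~2.

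\emph{Step 3 (Upgrade to a group isomorphism).} Set $A := \phi_{i,\sigma(i)}\phi'_{\sigma(i),i} \in \End(N_i)$. Case A says $\malclos{A}$ is unipotent on $\malclos{N_i}$, so the restriction $A|_{Z(N_i)}$ is a unipotent integer endomorphism of the finitely generated free abelian group $Z(N_i)$; its determinant is therefore $1$, and it is an automorphism of $Z(N_i)$. \cref{lem:endomorphsimsInducingAutomorphismOnCenterIsAutomorphismFGTFNilpotent} then promotes $A$ to an automorphism of $N_i$, so $\phi_{i,\sigma(i)}$ is surjective; the symmetric argument applied to $\phi'_{\sigma(i),i}\phi_{i,\sigma(i)} \in \End(N_{\sigma(i)})$ yields injectivity, and $\phi_{i,\sigma(i)}$ is the required isomorphism. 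The hardest piece of the argument is the existence half of Step~2; the rest reduces to a clean combination of the Fitting dichotomy with the unipotency observation on the centre.
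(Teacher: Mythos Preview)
Your Fitting--dichotomy strategy on the $\sigma'_{i,j}$ is sound and, once repaired, gives a route that bypasses the paper's induction on the number of distinct Hirsch lengths. But two steps as written do not go through.

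\textbf{Existence of $\sigma(i)$.} Your sketch (``if no $\malclos{V_l}$ equals $\malclos{N_i}$, build a non-trivial direct factorisation of a finite-index subgroup'') is not justified: pairwise-commuting normal subgroups generating $N_i$ with central pairwise intersections do \emph{not} obviously assemble into a direct product, and you give no mechanism for producing one. There is, however, a one-line replacement using the very dichotomy you set up. In your Case~B the image $\im\sigma'_{i,j}$ has finite index in $N_i$, and it commutes elementwise with $\im\phi_{i,j}$ (since $\im\sigma'_{i,j}\subseteq\prod_{l\ne j}\im\phi_{i,l}$); hence $\im\phi_{i,j}\leq C_{N_i}(\im\sigma'_{i,j})=Z(N_i)$ by \cref{lem:centralisersFiniteIndexSubgroupsFGTFNilpotentGroups}. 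If every $j$ were in Case~B, then $N_i=\grpgen{\im\phi_{i,1},\ldots,\im\phi_{i,k}}\leq Z(N_i)$ by \cref{lem:automorphismOfDirectProductImpliesNormalImages}, contradicting non-abelianness. So some $j$ is in Case~A; this simultaneously proves Part~3 for all $l\ne\sigma(i)$.

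\textbf{Bijectivity of $\sigma$.} Your conjugacy argument for $\tau\circ\sigma=\Id$ invokes ``the Malcev isomorphism $\malclos{\phi_{i,\sigma(i)}}$'', but at that point you have only shown $\malclos{\phi_{i,\sigma(i)}}$ is \emph{surjective}; if $h(N_{\sigma(i)})>h(N_i)$ it is not injective, $\malclos{\phi'_{\sigma(i),i}}\,\malclos{\phi_{i,\sigma(i)}}$ is not conjugate to $\malclos{\phi_{i,\sigma(i)}}\,\malclos{\phi'_{\sigma(i),i}}$, and the claim collapses. A correct replacement: once Part~3 is known, the induced automorphism $\bar{\phi}$ on $N/Z(N)$ has exactly one non-zero entry per row, namely $\bar{\phi}_{i,\sigma(i)}$; since $\bar{\phi}$ is an automorphism it can have no zero column, so $\sigma$ is surjective, hence bijective. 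Then $h(N_{\sigma(i)})\geq h(N_i)$ for all $i$ (from Malcev surjectivity) together with bijectivity of $\sigma$ forces $h(N_{\sigma(i)})=h(N_i)$ everywhere; now your Step~3 unipotency argument on $Z(N_i)$ plus \cref{lem:endomorphsimsInducingAutomorphismOnCenterIsAutomorphismFGTFNilpotent} gives surjectivity of $\phi_{i,\sigma(i)}$, and equal Hirsch length makes it an isomorphism. (The paper proceeds quite differently: it runs Fitting on $\sigma_{j,i}\in\End(N_j)$ to obtain \emph{injectivity} of some $\phi_{ij}$, and then inducts on the number of distinct Hirsch lengths; your approach, once patched, avoids that induction entirely.)
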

To prove this, we proceed by induction on the number of distinct Hirsch lengths among the \(N_{i}\). First, we prove a last technical lemma, which is a major tool in the proof of the theorem above.
\begin{lemma}	\label{lem:automorphismDirectProductfgtfNilpotentGroupsContainsInjectivePerRow}
	Let \(N_{1}, \ldots, N_{k}\) be \fgtf non-abelian nilpotent groups such that \(N_{i}\) is rationally indecomposable for each \(i \in \{1, \ldots, k\}\). Put \(N = \Times\limits_{i = 1}^{k} N_{i}\) and let \(\phi = (\phi_{ij})_{ij} \in \Aut(N)\). Then for each \(i \in \{1, \ldots, k\}\), there is a \(j \in \{1, \ldots, k\}\) such that \(\phi_{ij}: N_{j} \to N_{i}\) is injective.
	
	Moreover, if for a given \(i \in \{1, \ldots, k\}\) the obtained index \(j \in \{1, \ldots, k\}\) satisfies \(h(N_{j}) = h(N_{i})\), we have the following:
	\begin{enumerate}[1)]
		\item For all \(l \in \{1, \ldots, k\}\) with \(l \ne j\), \(\im \phi_{il} \leq Z(N_{i})\);
		\item The index \(l \in \{1, \ldots, k\}\) for which \(\phi_{il}\) is injective is unique (namely \(j\));
		\item The map \(\phi_{ij}\) is an isomorphism.
	\end{enumerate}
\end{lemma}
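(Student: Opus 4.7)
The plan is to handle the ``Moreover'' part (1)--(3) first under the assumption that an injective $\phi_{ij}$ with $h(N_j) = h(N_i)$ is in hand, and then return to the existence claim. Parts~(1) and~(2) are quick: injectivity plus the Hirsch length match makes $\im\phi_{ij}$ a finite-index subgroup of $N_i$ by Lemma~\ref{lem:HirschLengthProperties}, so the commuting-images structure of $\M$ combined with Lemma~\ref{lem:centralisersFiniteIndexSubgroupsFGTFNilpotentGroups} places each $\im\phi_{il}$ ($l \ne j$) inside $Z(N_i)$, giving~(1); a second injective $\phi_{il}$ would then embed non-abelian $N_l$ into the abelian $Z(N_i)$, yielding~(2) by contradiction.

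For~(3), the key step is to show $\phi_{il}(Z(N_l)) = 1$ for each $l \ne j$. Proposition~\ref{prop:NoAbelianFactorRationalClosureImpliesCentreInIsolatorCommutator}, applicable because each $\malclos{N_l}$ is directly indecomposable and non-abelian, gives $Z(N_l) \le \sqrt[N_l]{\gamma_2(N_l)}$; Lemma~\ref{lem:isolatorCommutatorInKernelToTorsionfreeAbelian}, applied to $\phi_{il}$ viewed as a morphism into the torsion-free abelian group $Z(N_i)$, forces $\sqrt[N_l]{\gamma_2(N_l)} \le \ker \phi_{il}$. Restricting $\phi$ to the characteristic subgroup $Z(N) = \prod_l Z(N_l)$ and invoking Lemma~\ref{lem:automorphismOfDirectProductImpliesNormalImages} then yields $Z(N_i) = \phi_{ij}(Z(N_j))$, so $\phi_{ij}|_{Z(N_j)}$ is an isomorphism. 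An analogous argument for the induced automorphism $\bar\phi$ of $\prod_l N_l / Z(N_l)$ (whose $i$-th row has only the $(i,j)$ entry non-zero thanks to~(1)) combined with the Hirsch length match makes $\bar\phi_{ij}$ an isomorphism. The Five Lemma (Lemma~\ref{lem:shortFiveLemmaGroups}) then upgrades $\phi_{ij}$ itself to an isomorphism.

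For the existence claim, I would exploit the diagonal relation $\phi_{ij}\phi'_{ji} + \sigma'_{i,j} = 1_{N_i}$ coming from $(\phi \inv\phi)_{ii} = 1_{N_i}$. For each $j$, applying the Fitting-type Lemma~\ref{lem:DirectDecompositionDueToEndomorphismFGTFNilpotentGroup} to $\sigma'_{i,j} \in \End(N_i)$ together with the normality from Lemma~\ref{lem:imagesSigmaijNormal} produces a direct decomposition of a finite-index subgroup of $N_i$. Passing to the rational Malcev completion $\malclos{N_i}$, which is divisible (hence has no proper finite-index subgroups) and directly indecomposable by hypothesis, forces a dichotomy: for each $j$, either $\sigma'_{i,j}$ is injective or $(\sigma'_{i,j})^k = 0$ for some $k$. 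In the injective case, Lemma~\ref{lem:centralisersFiniteIndexSubgroupsFGTFNilpotentGroups} places $\im\phi_{ij} \le Z(N_i)$; this cannot hold for every $j$ without forcing $N_i$ to be abelian (via Lemma~\ref{lem:automorphismOfDirectProductImpliesNormalImages}), so some $j_0$ is in the nilpotent case. A short iteration of the relation $x = \phi_{ij_0}\phi'_{j_0,i}(x) \cdot \sigma'_{i,j_0}(x)$ using $(\sigma'_{i,j_0})^k = 0$ shows $\phi_{ij_0}\phi'_{j_0,i}$ is an injective endomorphism of $N_i$, whence $\im\phi_{ij_0}$ has finite index in $N_i$ and $\phi'_{j_0,i}: N_i \to N_{j_0}$ is injective.

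The hard part will be promoting this ``composition injective'' conclusion to the actual injectivity of $\phi_{ij_0}: N_{j_0} \to N_i$, since finite-index image only yields $h(\im\phi_{ij_0}) = h(N_i)$ while $\ker\phi_{ij_0}$ can still be non-trivial when $h(N_{j_0}) > h(N_i)$. To close this gap I plan to combine the above analysis with its counterpart applied to $\inv\phi$ and to argue that the candidate indices $i \mapsto j_0(i)$ can be chosen to form a bijection of $\{1, \dots, k\}$: since each summand of $\sum_i\bigl(h(N_{j_0(i)}) - h(N_i)\bigr)$ is non-negative (by injectivity of $\phi'_{j_0(i),i}$) and the total telescopes to zero under a bijection, the Hirsch lengths must match pairwise. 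Securing that such a bijection can be arranged is essentially a Krull--Schmidt-type uniqueness statement for the decomposition $\malclos{N} = \prod_l \malclos{N_l}$ into directly indecomposable factors, and this is where the main technical weight of the existence half of the lemma lies.
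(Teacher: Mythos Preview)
Your treatment of the ``Moreover'' items (1)--(3) is essentially the paper's argument: finite-index image via Hirsch length, centraliser lemma, then \(Z(N_l)\le\sqrt[N_l]{\gamma_2(N_l)}\le\ker\phi_{il}\), reduction to the centre and the quotient by the centre, and the Five Lemma. That part is fine.

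The existence argument, however, has a real gap that you yourself flag. Working with \(\sigma'_{i,j}\in\End(N_i)\) and the relation \(\phi_{ij_0}\phi'_{j_0 i}+\sigma'_{i,j_0}=1_{N_i}\) only yields that the \emph{composition} \(\phi_{ij_0}\phi'_{j_0 i}\) is injective, hence that \(\phi'_{j_0 i}\colon N_i\to N_{j_0}\) is injective. This is an assertion about a column entry of \(\inv\phi\), not about \(\phi_{ij_0}\). Your proposed rescue (arrange the assignment \(i\mapsto j_0(i)\) into a bijection and telescope Hirsch lengths) is not established: nothing so far rules out collisions \(j_0(i)=j_0(i')\), and appealing to a Krull--Schmidt statement for \(\malclos N\) is circular in spirit, since that is precisely the kind of global structure the lemma is meant to supply locally. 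Even granting equal Hirsch lengths, you would still only have \(\phi'_{j_0 i}\) injective, and would need to loop back through the Moreover part for \(\inv\phi\) before recovering anything about \(\phi_{ij_0}\).

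The paper sidesteps all of this by two choices you did not make. First, it \emph{selects} \(j\) at the outset so that \(K:=\im\phi_{ij}\) is non-abelian (such a \(j\) exists since the \(\im\phi_{il}\) generate the non-abelian group \(N_i\)); a short computation then shows \(\hat K:=\im(\phi_{ij}\phi'_{ji})\) is non-abelian as well. Second, and crucially, it applies the Fitting-type lemma to \(\sigma_{j,i}\in\End(N_j)\) (using \(\inv\phi\cdot\phi=\Id\)) rather than to \(\sigma'_{i,j}\in\End(N_i)\). For \(x\in\ker\phi_{ij}\) one gets \(\sigma_{j,i}(x)=x\), so it suffices to show \(\sigma_{j,i}^t=0\). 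The injective alternative is excluded because it would force \(\im\phi'_{ji}\le Z(N_j)\), contradicting the non-abelianness of \(\hat K\). This gives \(\ker\phi_{ij}=1\) directly, with no global bijection or Krull--Schmidt input needed. The moral: switch from \(\sigma'_{i,j}\) in \(N_i\) to \(\sigma_{j,i}\) in \(N_j\), and pick \(j\) for non-abelian image rather than quantifying over all \(j\).
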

\begin{proof}
	Write \(\inv{\phi} = (\phi'_{ij})_{ij}\) and fix \(i \in \{1, \ldots, k\}\). As \(N_{i}\) is not abelian and \(N_{i}\) is generated by the commuting subgroups \(\im \phi_{i1}\) up to \(\im \phi_{ik}\), by \cref{lem:automorphismOfDirectProductImpliesNormalImages}, there is a \(j \in \{1, \ldots, k\}\) such that \(K := \im \phi_{ij}\) is non-abelian. Let \(x \in K\). Then
	\[
		\left(\sum_{l = 1}^{k} \phi_{il} \phi'_{li}\right)(x) = x,
	\]
	as this is just part of the identity \(\phi \circ \inv{\phi} = \Id_{N}\). We can rewrite this as \(\sigma'_{i, j}(x) (\phi_{ij} \circ \phi'_{ji})(x) = x\), where \(\sigma'_{i, j}\) is defined as in \cref{lem:imagesSigmaijNormal}. As \((\phi_{ij} \circ \phi'_{ji})(x) \in \im \phi_{ij} = K\), it follows that \(\sigma'_{i, j}(x) \in K\). Since \(\im \phi_{il}\) commutes with \(\im \phi_{ij}\) for \(l \ne j\), \(\im \sigma'_{i, j}\) also commutes with \(\im \phi_{ij}\). As \(\im \sigma'_{i, j}\) is contained in \(K\), we obtain the inclusion \(\im \sigma'_{i, j} \leq Z(K)\). Now, put \(\hat{K} := \im (\phi_{ij} \circ \phi'_{ji})\). Since \(x = \sigma'_{i, j}(x) (\phi_{ij} \circ \phi'_{ji})(x)\) for each \(x \in K\), we see that \(K = Z(K) \hat{K}\). Since \(K\) is non-abelian, \(\hat{K}\) must be non-abelian as well.
	
	Next, let \(x \in \ker \phi_{ij}\). As
	\[
		\left(\sum_{l = 1}^{k} \phi'_{jl}\phi_{lj}\right)(x) = x,
	\]
	we get \(\sigma_{j, i}(x) = x\). Applying \cref{lem:DirectDecompositionDueToEndomorphismFGTFNilpotentGroup} to \(\sigma_{j, i} \in \End(N_{j})\) and using \cref{lem:imagesSigmaijNormal}, we get a \(t \geq 1\) such that \(H := \im \sigma_{j, i}^{t} \ker \sigma_{j, i}^{t} \cong \im \sigma_{j, i}^{t} \times \ker \sigma_{j, i}^{t}\). Moreover, \(H\) has finite index in \(N_{j}\). Indeed, if we compute the Hirsch length of \(H\) using \cref{lem:HirschLengthProperties}, we see that
	\[
		h(H) = h(\ker \sigma_{j, i}^{t}) + h(\im \sigma_{j, i}^{t}) = h(N_{j}).
	\]
	So, \(H\) indeed has finite index in \(N_{j}\) by the same lemma. Since \(N_{j}\) is rationally indecomposable, \cref{prop:DirectlyIndecomposableMalcevCompletionImpliesDirectlyIndecomposable} implies that either \(\ker \sigma_{j, i}^{t} = 1\) or \(\im \sigma_{j, i}^{t} = 1\). Suppose that \(\ker \sigma_{j, i}^{t} = 1\). Then \(\sigma_{j, i}^{t}\) is injective, which implies that \(\sigma_{j, i}\) is injective as well. Then \(h(\im \sigma_{j, i}) = h(N_{j})\), thus \(\im \sigma_{j, i}\) has finite index in \(N_{j}\). Now, remark that \([\im \sigma_{j, i}, \im \phi'_{j, i}] = 1\), as \(\im \phi'_{j, i}\) commutes with \(\im \phi'_{j, l}\) if \(l \ne i\). Therefore, \(\im \phi'_{j, i} \leq C_{N_{j}}(\im \sigma_{j, i})\), which lies in \(Z(N_{j})\) due to \cref{lem:centralisersFiniteIndexSubgroupsFGTFNilpotentGroups} since \(\im \sigma_{j, i}\) has finite index in \(N_{j}\). On the other hand, recall that \(\hat{K} = \im (\phi_{i, j} \circ \phi'_{j, i})\) is non-abelian. Therefore, \(\im \phi'_{j, i}\) is non-abelian as well. This contradicts the previously proven inclusion \(\im \phi'_{j, i} \leq Z(N_{j})\). Consequently, the assumption that \(\ker \sigma_{j, i}^{t} = 1\) is false, which implies that \(\im \sigma_{j, i}^{t} = 1\).
	
	Going back to the equality \(\sigma_{j, i}(x) = x\), the fact that \(\im \sigma_{j, i}^{t} = 1\) implies that \(x = 1\). As we took \(x \in \ker \phi_{ij}\), this finally proves that \(\ker \phi_{ij} = 1\), which means that \(\phi_{ij}\) is injective. Since \(i\) was arbitrary, the result follows. \newline
	
	Next, fix \(i \in \{1, \ldots, k\}\) and let \(j \in \{1, \ldots, k\}\) be an index such that \(\phi_{ij}\) is injective. Suppose that \(h(N_{i}) = h(N_{j})\). \cref{lem:HirschLengthProperties} then implies that \(\im \phi_{ij}\) has finite index in \(N_{i}\). Let \(l \in \{1, \ldots, k\}\) be distinct from \(j\). From \cref{lem:centralisersFiniteIndexSubgroupsFGTFNilpotentGroups} and the commuting conditions on \((\phi_{ij})_{ij}\) we derive that \(\im \phi_{il} \leq C_{N_{i}}(\im \phi_{ij}) \leq Z(N_{i})\), from which the first item follows. Moreover, \(N_{l}\) is non-abelian, so \(\phi_{il}\) cannot be injective, which proves the second item.
	
	Finally, to prove that \(\phi_{ij}\) is an isomorphism, note that by \cref{lem:isolatorCommutatorInKernelToTorsionfreeAbelian,prop:NoAbelianFactorRationalClosureImpliesCentreInIsolatorCommutator}, the inclusions \(Z(N_{l}) \leq \sqrt[N_{l}]{\gamma_{2}(N_{l})} \leq \ker \phi_{il}\) hold for each \(l \in \{1, \ldots, k\}\) different from \(j\). Let \(\phi_{Z}\) denote the induced automorphism on \(Z(N)\). Then the previous inclusions yield that the \(i\)th row of \(\phi_{Z}\) consists of zero maps everywhere except for the \(j\)th column; there we have \(\restr{\phi_{ij}}{Z(N_{j})}\). As \(\phi_{Z}\) is an automorphism, this restricted map must map into \(Z(N_{i})\) and must be surjective. The map \(\phi_{ij}\) is injective, hence so is it restriction. We conclude that \(\restr{\phi_{ij}}{Z(N_{j})}\) is an isomorphism between \(Z(N_{j})\) and \(Z(N_{i})\). In particular, both groups have the same Hirsch length.
	
	Now, we consider \(\bar{\phi}\), the induced automorphism on \(N / Z(N)\). Since \(\im \phi_{il} \leq Z(N_{i})\) for \(l \ne j\), the \(i\)th row of \(\bar{\phi}\) also consists of zero maps everywhere except for the \(j\)th column; there we have \(\bar{\phi}_{ij}: N_{j} / Z(N_{j}) \to N_{i} / Z(N_{i})\). As \(\bar{\phi}\) is an automorphism, \(\bar{\phi}_{ij}\) must be surjective. Recall that we assume that \(h(N_{j}) = h(N_{i})\) and that we have already proven that \(h(Z(N_{i})) = h(Z(N_{j}))\). Therefore, by \cref{lem:HirschLengthProperties}, we find that \(h(N_{j} / Z(N_{j})) = h(N_{i} / Z(N_{i}))\). Applying the first isomorphism theorem to \(\bar{\phi}_{ij}\) and using \cref{lem:HirschLengthProperties}, we get
	\[
		h(\ker \bar{\phi}_{ij}) = h(N_{j}/Z(N_{j})) - h(\im \bar{\phi}_{ij}) = h(N_{j}/Z(N_{j})) - h(N_{i}/Z(N_{i})) = 0,
	\]
	since \(\bar{\phi}_{ij}\) is surjective. Consequently, \(\ker \bar{\phi}_{ij}\) is finite. Since \(N_{j} / Z(N_{j})\) is torsion-free, \(\ker \bar{\phi}_{ij}\) must be trivial, which implies that \(\bar{\phi}_{ij}\) is injective as well. We conclude that \(\bar{\phi}_{ij}\) is an isomorphism.
	
	Summarised, both \(\restr{\phi_{ij}}{Z(N_{j})}\) and \(\bar{\phi}_{ij}\) are isomorphisms. \cref{lem:shortFiveLemmaGroups} then implies that \(\phi_{ij}\) is an isomorphism too, which proves the last item.
\end{proof}

\begin{proof}[Proof of \cref{theo:AutomorphismGroupDirectProductFGTFNilpotentGroupsDirectyIndecomposableRationalClosure}]
	As mentioned earlier, we proceed by induction on the number of distinct Hirsch lengths, say \(m\), among the \(N_{i}\). We start with the case \(m = 1\). By \cref{lem:automorphismDirectProductfgtfNilpotentGroupsContainsInjectivePerRow}, we find for each \(i \in \{1, \ldots, k\}\) a unique \(\sigma(i) \in \{1, \ldots, k\}\) such that \(\phi_{i\sigma(i)}\) is an isomorphism, since all factors have the same Hirsch length, and such that for \(j \in \{1, \ldots, k\}\) with \(j \ne \sigma(i)\), we have that \(\im \phi_{ij} \leq Z(N_{i})\). Consequently, both the first and third item are proven. We only have to argue that each column of \((\phi_{ij})_{ij}\) contains an isomorphism. In other words, we have to prove that \(\sigma: \{1, \ldots, k\} \to \{1, \ldots, k\}\) is surjective.
	
	To that end, note that the induced automorphism \(\bar{\phi}\) on \(N / Z(N)\) only has one non-zero map on each row, namely \(\bar{\phi}_{i\sigma(i)}\). As \(\bar{\phi}\) is an automorphism, there cannot be a zero column, which implies that \(\sigma\) must indeed be surjective. This proves the case \(m = 1\).	
	
	Now, suppose the result holds for \(m\) and that we now have \(m + 1\) distinct Hirsch lengths. Suppose, after reordering, that \(N_{l + 1}\) up to \(N_{k}\) are all the groups that have the lowest Hirsch length among the \(N_{i}\). 
	For \(i \in \{l + 1, \ldots, k\}\), \cref{lem:automorphismDirectProductfgtfNilpotentGroupsContainsInjectivePerRow} implies there is a \(j \in \{1, \ldots, k\}\) such that \(\phi_{ij}: N_{j} \to N_{i}\) is injective. Then \(h(N_{j}) \leq h(N_{i})\), which proves that \(j \in \{l + 1, \ldots, k\}\). Therefore, \(h(N_{j}) = h(N_{i})\), so the moreover-part of \cref{lem:automorphismDirectProductfgtfNilpotentGroupsContainsInjectivePerRow} yields that \(j =: \sigma(i)\) is the only index in \(\{1, \ldots, k\}\) such that \(\phi_{ij}\) is injective, that \(\im \phi_{ip} \leq Z(N_{i})\) for all \(p \ne \sigma(i)\), and that \(\phi_{i \sigma(i)}\) is an isomorphism. Furthermore, \cref{lem:isolatorCommutatorInKernelToTorsionfreeAbelian,prop:NoAbelianFactorRationalClosureImpliesCentreInIsolatorCommutator} combined imply that \(\phi_{ip}(Z(N_{p})) = 1\) for \(p \ne \sigma(i)\).

	Note that the same conclusions hold for \(\inv{\phi} = (\phi'_{ij})_{ij}\), \ie for each \(i \in \{l + 1, \ldots, k\}\), there is a unique \(\tau(i) \in \{l + 1, \ldots, k\}\) such that \(\phi'_{i\tau(i)}\) is an isomorphism, that \(\im \phi'_{i \tau(i)}\) has finite index in \(N_{i}\), and that, for \(j \ne \tau(i)\), both \(\im \phi'_{ij} \leq Z(N_{i})\) and \(\phi'_{ij}(Z(N_{j})) = 1\) hold.
	
	At this point, we thus have maps \(\sigma, \tau: \{l + 1, \ldots, k\} \to \{l + 1, \ldots, k\}\). We argue that both are bijective. To do so, we look at the induced maps \(\bar{\phi}\), \(\inv{\bar{\phi}}\) on \(N / Z(N)\). For each \(i \in \{l + 1, \ldots, k\}\) the \(i\)th row of \(\bar{\phi}\) and \(\inv{\bar{\phi}}\) consists of zero maps everywhere except for the \(\sigma(i)\)th and \(\tau(i)\)th column, respectively. From the equality \(\Id_{N / Z(N)} = \bar{\phi} \circ \inv{\bar{\phi}}\) we derive that
	\[
		\Id_{N_{i}} = \bar{\phi}_{i\sigma(i)} \circ \bar{\phi}'_{\sigma(i)i}
	\]
	holds for all \(i \in \{l + 1, \ldots, k\}\). The only non-zero map on the \(\sigma(i)\)th row is \(\bar{\phi}'_{\sigma(i)\tau(\sigma(i))}\). Therefore, \(\tau\) is surjective. By swapping the roles of \(\phi\) and \(\inv{\phi}\), we derive that \(\sigma\) is surjective as well.

	Next, write \(M_{1} = \Times\limits_{i = 1}^{l} N_{i}\) and \(M_{2} = \Times\limits_{i = l + 1}^{k} N_{i}\). Then \(M_{1} \times M_{2} = N\). Write
	\[
		\phi = \begin{pmatrix} \alpha & \beta \\ \gamma & \delta \end{pmatrix},
	\]
	in the notation of an endomorphism on \(M_{1} \times M_{2}\). Now, in this notation, we have proven in particular that \(\gamma(M_{1}) \leq Z(M_{2})\). Combining this with \eqref{eq:isolatorDirectProducts}, \cref{prop:NoAbelianFactorRationalClosureImpliesCentreInIsolatorCommutator} applied to each \(N_{i}\) and \cref{lem:isolatorCommutatorInKernelToTorsionfreeAbelian}, we get that \(Z(M_{1}) \leq \sqrt[M_{1}]{\gamma_{2}(M_{1})} \leq \ker \gamma\). Hence, the restriction \(\phi_{Z}\) of \(\phi\) to \(Z(N)\) has matrix representation
	\[
		\phi_{Z} = \begin{pmatrix} \restr{\alpha}{Z(M_{1})} & \restr{\beta}{Z(M_{2})} \\ 0 & \restr{\delta}{Z(M_{2})} \end{pmatrix}.
	\]
	Note that \(\phi_{Z}\) is an automorphism of \(Z(N)\). Now, define the map
	\[
		\psi = \begin{pmatrix} \alpha & \beta \\ 0 & \delta \end{pmatrix}.
	\]
	Then \(\psi\) is an endomorphism of \(N\), as \(\im \alpha\) and \(\im \beta\) commute. Its restriction \(\psi_{Z}\) to \(Z(N)\) coincides with \(\phi_{Z}\), so \cref{lem:endomorphsimsInducingAutomorphismOnCenterIsAutomorphismFGTFNilpotent} implies that \(\psi \in \Aut(N)\). Therefore, \(\alpha\) is injective.
 
Consequently, \(\im \alpha\) has finite index in \(M_{1}\). Since \(\im \beta\) commutes with \(\im \alpha\), this implies that \(\im \beta \leq Z(M_{1})\) by \cref{lem:centralisersFiniteIndexSubgroupsFGTFNilpotentGroups}. By a similar argument as for \(\gamma\), we get that \(Z(M_{2}) \leq \ker \beta\). Putting everything together, we get
	\[
		\psi_{Z} = \begin{pmatrix} \restr{\alpha}{Z(M_{1})} & 0 \\ 0 & \restr{\delta}{Z(M_{2})} \end{pmatrix}.
	\]
	We deduce that \(\restr{\alpha}{Z(M_{1})}\) is an automorphism of \(Z(M_{1})\), so \(\alpha\) is an automorphism itself, again by \cref{lem:endomorphsimsInducingAutomorphismOnCenterIsAutomorphismFGTFNilpotent}.
	
	Finally, we are able to use the induction hypothesis on \(M_{1}\), since \(M_{1}\) is a direct product of \fgtf non-abelian nilpotent rationally indecomposable groups, where there are \(m\) different Hirsch lengths among them. More precisely, the induction hypothesis applied to \(\alpha\) yields the following:
	\begin{itemize}
		\item For each \(i \in \{1, \ldots, l\}\), there exists a unique \(\sigma(i) \in \{1, \ldots, l\}\) such that \(\phi_{i\sigma(i)}\) is an isomorphism;
		\item For each \(i \in \{1, \ldots, l\}\) and \(j \in \{1, \ldots, l\}\) with \(j \ne \sigma(i)\), the map \(\phi_{ij}\) satisfies \(\im \phi_{ij} \leq Z(N_{i})\). Note, however, that we can immediately extend this to \(j \in \{1, \ldots, k\}\) with \(j \ne \sigma(i)\), as \(\im \phi_{ij}\) commutes with \(\im \phi_{i\sigma(i)} = N_{i}\);

		\item The map \(\sigma: \{1, \ldots, l\} \to \{1, \ldots, l\}\) is bijective.
	\end{itemize}
	
	Therefore, combining this with what we proved earlier, we get a bijective map \(\sigma: \{1, \ldots, k\} \to \{1, \ldots, k\}\) such that for all \(i,  j \in \{1, \ldots, k\}\) with \(j \ne \sigma(i)\), the map \(\phi_{i \sigma(i)}\) is an isomorphism and \(\im \phi_{ij} \leq Z(N_{i})\). This finishes the proof.	
\end{proof}
\cref{theo:AutomorphismGroupDirectProductFGTFNilpotentGroupsDirectyIndecomposableRationalClosure} yields necessary conditions on the matrix representation of an automorphism of a direct product. We now prove the converse (under slightly weaker conditions on the \(N_{i}\)), namely that each matrix \((\phi_{ij})_{ij}\) of morphisms satisfying the necessary conditions yields an automorphism of \(N\).
\begin{prop}\label{prop:converseInclusionAutomorphismGroupDirectProductFGTFNilpotentGroupsDirectyIndecomposableRationalClosure}
Let \(N_{1}, \ldots, N_{k}\) be \fgtf non-abelian nilpotent groups such that, for each \(i \in \{1, \ldots, k\}\), \(\malclos{N_{i}}\) has no abelian direct factors. Put \(N = \Times\limits_{i = 1}^{k} N_{i}\). Let \(\sigma \in S_{k}\) be a permutation. Suppose that we have a matrix \((\phi_{ij})_{ij}\) of morphisms, with \(\phi_{ij}: N_{j} \to N_{i}\), satisfying the following two conditions:
\begin{enumerate}[1)]
	\item For each \(i \in \{1, \ldots, k\}\), the map \(\phi_{i \sigma(i)}\) is an isomorphism.
	\item For all \(i, j \in \{1, \ldots, k\}\) with \(j \ne \sigma(i)\), the map \(\phi_{ij}\) satisfies \(\im \phi_{ij} \leq Z(N_{i})\).
\end{enumerate}
Then \(\phi := (\phi_{ij})_{ij}\) defines an automorphism of \(N\) under the identification of \cref{lem:MatrixRepresentationEndomorphismMonoid}.
\end{prop}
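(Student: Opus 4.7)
\medskip

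\noindent\textbf{Proof proposal.}
My plan is to verify the proposition in three short steps: first check that $\phi$ actually defines an endomorphism of $N$ (i.e.\ satisfies the commuting image conditions needed for \cref{lem:MatrixRepresentationEndomorphismMonoid}), then analyse its restriction to the centre, and finally invoke \cref{lem:endomorphsimsInducingAutomorphismOnCenterIsAutomorphismFGTFNilpotent} to upgrade this to an automorphism of $N$.

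For the first step, fix $i$ and any $k \ne l$ in $\{1, \ldots, k\}$. At most one of the two indices $k, l$ equals $\sigma(i)$, so at least one of $\im \phi_{ik}$, $\im \phi_{il}$ is contained in $Z(N_i)$ by hypothesis~2. Hence $[\im \phi_{ik}, \im \phi_{il}] = 1$, so $(\phi_{ij})_{ij}$ lies in $\M$ and defines an endomorphism of $N$.

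The key step is to understand $\phi_Z := \phi|_{Z(N)}$. Since $N$ is a direct product, $Z(N) = \Times_{i=1}^{k} Z(N_i)$, and each $Z(N_i)$ is torsion-free abelian. I claim that for $j \ne \sigma(i)$ the restriction $\phi_{ij}|_{Z(N_j)}$ is trivial. Indeed, by hypothesis~2 we may view $\phi_{ij}$ as a homomorphism from $N_j$ into the torsion-free abelian group $Z(N_i)$, so \cref{lem:isolatorCommutatorInKernelToTorsionfreeAbelian} gives $\sqrt[N_j]{\gamma_{2}(N_j)} \leq \ker \phi_{ij}$. Since $\malclos{N_j}$ has no abelian direct factor, \cref{prop:NoAbelianFactorRationalClosureImpliesCentreInIsolatorCommutator} yields $Z(N_j) \leq \sqrt[N_j]{\gamma_{2}(N_j)}$, and thus $Z(N_j) \leq \ker \phi_{ij}$, as claimed. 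In particular $\phi_{ij}(Z(N_j))\leq Z(N_i)$ is automatic for every pair $(i,j)$, so $\phi_Z$ is well defined as an endomorphism of $Z(N)$, and in matrix form it has exactly one non-zero entry per row, namely the isomorphism $\phi_{i\sigma(i)}|_{Z(N_{\sigma(i)})} : Z(N_{\sigma(i)}) \to Z(N_i)$ in column $\sigma(i)$.

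Because $\sigma \in S_{k}$ is a permutation, this shows $\phi_Z$ is the composition of a permutation of the factors $Z(N_i)$ with isomorphisms, so $\phi_Z \in \Aut(Z(N))$; its inverse is the analogous matrix with $\sigma^{-1}$ and the inverse isomorphisms in place. Applying \cref{lem:endomorphsimsInducingAutomorphismOnCenterIsAutomorphismFGTFNilpotent} to the \fgtf nilpotent group $N$ then immediately gives $\phi \in \Aut(N)$. The only real content of the argument is the vanishing of $\phi_{ij}|_{Z(N_j)}$ for $j\ne \sigma(i)$, which is where the hypothesis that each $\malclos{N_i}$ has no abelian direct factor is used; the rest is assembly of previously established lemmas.
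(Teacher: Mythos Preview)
Your proof is correct and follows essentially the same route as the paper: verify the commuting conditions, show the off-diagonal restrictions to the centre vanish via \cref{lem:isolatorCommutatorInKernelToTorsionfreeAbelian} and \cref{prop:NoAbelianFactorRationalClosureImpliesCentreInIsolatorCommutator}, conclude that $\phi_Z$ is a permutation-of-isomorphisms automorphism of $Z(N)$, and finish with \cref{lem:endomorphsimsInducingAutomorphismOnCenterIsAutomorphismFGTFNilpotent}. One cosmetic point: in your first step you reuse the letter $k$ as an index (``any $k \ne l$ in $\{1,\ldots,k\}$''), which clashes with the number of factors; pick different letters there.
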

\begin{proof}
	The commuting conditions for \((\phi_{ij})_{ij}\) to define an endomorphism \(\phi\) are clearly met. Let \(i \in \{1, \ldots, k\}\) and let \(\phi_{i}: N \to N_{i}\) be the composition of \(\phi\) with the projection \(N \to N_{i}\). Then
	\[
		\phi_{i}(Z(N)) \leq \grpgen{\phi_{i1}(Z(N_{1})), \ldots, \phi_{ik}(Z(N_{k}))}.
	\]
	The subgroup on the right lies in \(Z(N_{i})\), since \(\phi_{ij}(Z(N_{j})) \leq \im \phi_{ij} \leq Z(N_{i})\) if \(j \ne \sigma(i)\) by assumption, and \(\phi_{i\sigma(i)}(Z(N_{\sigma(i)})) = Z(N_{i})\) as \(\phi_{i \sigma(i)}\) is an isomorphism. As \(i\) was arbitrary, we can deduce that \(\phi(Z(N)) \leq Z(N)\).
	
	Now, let \(\phi_{Z} = (\psi_{ij})_{ij}\) be the induced endomorphism on \(Z(N)\). For all \(i, j \in \{1, \ldots, k\}\), we have \(\psi_{ij} = \restr{\phi_{ij}}{Z(N_{j})}\). For \(j \ne \sigma(i)\), the map \(\psi_{ij}\) is the zero map, as \(Z(N_{j}) \leq \sqrt[N_{j}]{\gamma_{2}(N_{j})} \leq \ker \phi_{ij}\) by \cref{prop:NoAbelianFactorRationalClosureImpliesCentreInIsolatorCommutator,lem:isolatorCommutatorInKernelToTorsionfreeAbelian}, respectively. Each map \(\psi_{i\sigma(i)}\) is an isomorphism, being the restriction of an isomorphism to the centre. Since \(\sigma\) is a bijection, \((\psi_{ij})_{ij}\) is a matrix containing exactly one isomorphism in each row and each column, and zero maps in all other entries. Therefore, \(\phi_{Z}\) is an automorphism of \(Z(N)\). \cref{lem:endomorphsimsInducingAutomorphismOnCenterIsAutomorphismFGTFNilpotent} then implies that \(\phi\) is an automorphism of \(N\).
\end{proof}

\subsection{Reidemeister spectrum}
\begin{prop}	\label{prop:ReidemeisterNumberDirectProductCentralMaps}
	Let \(N_{1}, \ldots, N_{k}\) be \fgtf non-abelian nilpotent groups. Put \(N = \Times\limits_{i = 1}^{k} N_{i}\) and let \(\phi = (\phi_{ij})_{ij} \in \Aut(N)\). Suppose that \(\phi\) satisfies the following properties:
	\begin{enumerate}[1)]
		\item For each \(i \in \{1, \ldots, k\}\), there is a unique \(\sigma(i) \in \{1, \ldots, k\}\) such that \(\phi_{i\sigma(i)}\) is an isomorphism;
		\item For each \(i \in \{1, \ldots, k\}\) and \(j \ne \sigma(i)\), the map \(\phi_{ij}: N_{j} \to N_{i}\) satisfies \(\im \phi_{ij} \leq Z(N_{i})\) and \(\phi_{ij}(Z(N_{j})) = 1\).
	\end{enumerate}
	Define \(\psi = (\psi_{ij})_{ij}\), where
	\[
		\psi_{ij} = \begin{cases}
			\phi_{i \sigma(i)}	&	\mbox{if } j = \sigma(i)	\\
			0				&	\mbox{otherwise}.
		\end{cases}
	\]
	Then \(\psi\) is an automorphism of \(N\) and \(R(\phi) = R(\psi)\).
\end{prop}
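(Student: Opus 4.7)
My plan is to exploit the product formula for Reidemeister numbers in \fgtf nilpotent groups from \cref{prop:ReidemeisterNumberEqualsProductOfCentreAndQuotientNumber}, and show that the equality $R(\phi) = R(\psi)$ follows because $\phi$ and $\psi$ induce \emph{identical} maps on both $Z(N)$ and $N / Z(N)$.

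First, I would verify that $\psi$ is a well-defined endomorphism of $N$ via \cref{lem:MatrixRepresentationEndomorphismMonoid}: on each row $i$ of the matrix $(\psi_{ij})_{ij}$, only the entry at column $\sigma(i)$ is non-zero, so the commutativity condition $[\im \psi_{ik}, \im \psi_{il}] = 1$ for $k \ne l$ is trivially satisfied.

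Next, I would compare $\phi$ and $\psi$ on $Z(N) = \Times_{i = 1}^{k} Z(N_{i})$. For $z = (z_{1}, \ldots, z_{k}) \in Z(N)$, the $i$-th component of $\phi(z)$ is the product (in some order, which does not matter since images commute) of the elements $\phi_{ij}(z_{j})$ for $j \in \{1, \ldots, k\}$. The hypothesis $\phi_{ij}(Z(N_{j})) = 1$ for $j \ne \sigma(i)$ kills every factor except $\phi_{i\sigma(i)}(z_{\sigma(i)})$, which is precisely the $i$-th component of $\psi(z)$. Hence $\phi_{Z} = \psi_{Z}$. A parallel calculation on the quotient $N / Z(N) = \Times_{i = 1}^{k} N_{i} / Z(N_{i})$ shows $\bar{\phi} = \bar{\psi}$: for each $g = (g_{1}, \ldots, g_{k}) \in N$, the image of $\phi(g)$ in $N_{i} / Z(N_{i})$ is the product of the cosets $\phi_{ij}(g_{j}) Z(N_{i})$, and each factor with $j \ne \sigma(i)$ is trivial because $\im \phi_{ij} \leq Z(N_{i})$, leaving only $\phi_{i\sigma(i)}(g_{\sigma(i)}) Z(N_{i})$, which also appears in the analogous expression for $\psi$.

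Since $Z(N)$ is characteristic and $\phi \in \Aut(N)$, the restriction $\phi_{Z}$ is an automorphism of $Z(N)$; thus $\psi_{Z} = \phi_{Z}$ is an automorphism of $Z(N)$, and \cref{lem:endomorphsimsInducingAutomorphismOfCenterIsAutomorphismFGTFNilpotent}---i.e.\ \cref{lem:endomorphsimsInducingAutomorphismOnCenterIsAutomorphismFGTFNilpotent}---yields that $\psi$ itself is an automorphism of $N$. Finally, applying \cref{prop:ReidemeisterNumberEqualsProductOfCentreAndQuotientNumber} to both $\phi$ and $\psi$ gives
\[
	R(\phi) = R(\phi_{Z}) R(\bar{\phi}) = R(\psi_{Z}) R(\bar{\psi}) = R(\psi),
\]
as desired. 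There is no real obstacle here beyond the careful bookkeeping of the two coset computations; once the identifications $\phi_{Z} = \psi_{Z}$ and $\bar{\phi} = \bar{\psi}$ are in place, the product formula does all the work.
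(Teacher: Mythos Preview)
Your proposal is correct and follows essentially the same approach as the paper: you show that \(\phi\) and \(\psi\) induce identical maps on both \(Z(N)\) and \(N/Z(N)\), invoke \cref{lem:endomorphsimsInducingAutomorphismOnCenterIsAutomorphismFGTFNilpotent} to conclude that \(\psi\) is an automorphism, and then apply the product formula of \cref{prop:ReidemeisterNumberEqualsProductOfCentreAndQuotientNumber}. The only difference is that you spell out the coset computations a bit more explicitly than the paper does.
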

\begin{remark}
	Explicitly requiring the condition \(\im \phi_{ij} \leq Z(N_{i})\) for \(j \ne \sigma(i)\) is redundant, as it follows from the first property and the commuting conditions on \((\phi_{ij})_{ij}\). Indeed, since \(\phi_{i\sigma(i)}\) is an isomorphism, \(\im \phi_{i \sigma(i)} = N_{i}\). By the commuting conditions, we then have for \(j \ne \sigma(i)\) that
	\[
		1 = [\im \phi_{ij}, \im \phi_{i \sigma(i)}] = [\im \phi_{ij}, N_{i}].
	\]
	This implies that \(\im \phi_{ij} \leq Z(N_{i})\). We include it, however, for the sake of consistency with the other results.
\end{remark}
\begin{proof}
	First, note that \(\psi\) is a well-defined endomorphism of \(N\), since there is only one map per row in the matrix representation of \(\psi\). Secondly, let \(\phi_{Z}\) and \(\psi_{Z}\) be the restrictions of \(\phi\) and \(\psi\), respectively, to \(Z(N)\). Then they have the same matrix representations, by the assumptions on \(\phi\) and the construction of \(\psi\). By \cref{lem:endomorphsimsInducingAutomorphismOnCenterIsAutomorphismFGTFNilpotent}, \(\psi\) is then an automorphism of \(N\).
	Finally, let \(\bar{\phi}\) and \(\bar{\psi}\) be the induced automorphisms on \(N / Z(N)\). Their matrix representations coincide as well by the assumptions on \(\phi\) and the construction of \(\psi\). Hence, combining this with \cref{prop:ReidemeisterNumberEqualsProductOfCentreAndQuotientNumber}, we obtain
	\[
		R(\phi) = R(\phi_{Z})R(\bar{\phi}) = R(\psi_{Z})R(\bar{\psi}) = R(\psi). \qedhere
	\]
\end{proof}
\begin{cor}
	Let \(N_{1}, \ldots, N_{k}\) be non-isomorphic \fgtf non-abelian nilpotent groups and let \(r_{1}, \ldots, r_{k}\) be positive integers. Put \(N = \Times\limits_{i = 1}^{k} N_{i}^{r_{i}}\) and \(r = r_{1} + \ldots + r_{k}\). Suppose every \(\phi = (\phi_{ij})_{ij} \in \Aut(N)\) satisfies the following conditions:
	\begin{enumerate}[1)]
		\item For each \(i \in \{1, \ldots, r\}\), there is a unique \(\sigma(i) \in \{1, \ldots, r\}\) such that \(\phi_{i\sigma(i)}\) is an isomorphism (which is then automatically an automorphism);
		\item For each \(i \in \{1, \ldots, r\}\) and \(j \ne \sigma(i)\), the map \(\phi_{ij}: N_{j} \to N_{i}\) satisfies \(\im \phi_{ij} \leq Z(N_{i})\) and \(\phi_{ij}(Z(N_{j})) = 1\).
	\end{enumerate}
	Then
	\[
		\SpecR(N) = \prod_{i = 1}^{k} \left(\bigcup_{j = 1}^{r_{i}} \SpecR(N_{i})^{(j)}\right).
	\]
\end{cor}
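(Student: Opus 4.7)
The plan is to prove each inclusion separately. For the inclusion $\supseteq$, I would combine \cref{prop:ReidemeisterNumbersOfAut(G)wrSn} and \cref{prop:SpecRDirectProductCharacteristicSubgroups}: the former yields $\bigcup_{j = 1}^{r_{i}} \SpecR(N_{i})^{(j)} \subseteq \SpecR(N_{i}^{r_{i}})$ for each $i$, and the latter, applied to the decomposition $N = \Times\limits_{i = 1}^{k} N_{i}^{r_{i}}$, gives $\prod_{i = 1}^{k} \SpecR(N_{i}^{r_{i}}) \subseteq \SpecR(N)$. Multiplying the former inclusions factor by factor yields the desired containment.

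For the reverse inclusion, I would fix an arbitrary $\phi = (\phi_{ij})_{ij} \in \Aut(N)$ and apply \cref{prop:ReidemeisterNumberDirectProductCentralMaps}: the two hypotheses of the corollary are precisely what is required to produce an automorphism $\psi$ of $N$ with $R(\phi) = R(\psi)$ and a single nonzero entry $\psi_{i\sigma(i)} = \phi_{i\sigma(i)}: N_{\sigma(i)} \to N_{i}$ on each row, for some permutation $\sigma \in S_{r}$. The crucial observation is then that $\sigma$ must preserve the partition of $\{1, \ldots, r\}$ into the $k$ blocks corresponding to the factors $N_{1}^{r_{1}}, \ldots, N_{k}^{r_{k}}$: since $\phi_{i\sigma(i)}$ is an isomorphism, $N_{i}$ and $N_{\sigma(i)}$ must be isomorphic, and the assumption that $N_{1}, \ldots, N_{k}$ are pairwise non-isomorphic forces $\sigma(i)$ to lie in the same block as $i$. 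Consequently, $\sigma$ decomposes as a product $\sigma_{1} \cdots \sigma_{k}$ with $\sigma_{i} \in S_{r_{i}}$, and $\psi$ itself decomposes as a direct product $\Times\limits_{i = 1}^{k} \psi_{i}$, where each $\psi_{i}$ lies in the wreath product $\Aut(N_{i}) \wr S_{r_{i}} \leq \Aut(N_{i}^{r_{i}})$.

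Once this decomposition is in hand, the computation of $R(\phi) = R(\psi)$ follows readily: \cref{prop:SpecRDirectProductCharacteristicSubgroups} gives $R(\psi) = \prod_{i = 1}^{k} R(\psi_{i})$, and \cref{prop:ReidemeisterNumbersOfAut(G)wrSn} places each $R(\psi_{i})$ in $\bigcup_{j = 1}^{r_{i}} \SpecR(N_{i})^{(j)}$. Hence $R(\phi)$ lies in $\prod_{i = 1}^{k} \bigcup_{j = 1}^{r_{i}} \SpecR(N_{i})^{(j)}$, which completes the proof. The main obstacle, I expect, will be justifying that $\sigma$ preserves the block partition; beyond that, the argument is largely bookkeeping combined with an appeal to the earlier propositions, once one recognises the block-wise automorphism of $N_{i}^{r_{i}}$ consisting of a row-permutation of isomorphisms as an element of $\Aut(N_{i}) \wr S_{r_{i}}$.
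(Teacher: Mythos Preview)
Your proposal is correct and follows essentially the same approach as the paper: apply \cref{prop:ReidemeisterNumberDirectProductCentralMaps} to reduce every automorphism to one lying in $\Times_{i=1}^{k}(\Aut(N_{i}) \wr S_{r_{i}})$, then conclude via \cref{prop:SpecRDirectProductCharacteristicSubgroups,prop:ReidemeisterNumbersOfAut(G)wrSn}. You have simply made explicit the block-preservation step (using that the $N_{i}$ are pairwise non-isomorphic), which the paper leaves implicit.
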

\begin{proof}
	By \cref{prop:ReidemeisterNumberDirectProductCentralMaps}, we can obtain the complete Reidemeister spectrum of \(N\) by only looking at the automorphisms lying in
	\[
		\Times_{i = 1}^{k} \left(\Aut(N_{i}) \wr S_{r_{i}}\right).
	\]
	The result now follows by combining \cref{prop:SpecRDirectProductCharacteristicSubgroups,prop:ReidemeisterNumbersOfAut(G)wrSn}.
\end{proof}
Combining the previous corollary with \cref{theo:AutomorphismGroupDirectProductFGTFNilpotentGroupsDirectyIndecomposableRationalClosure}, we get the following theorem.
\begin{theorem}	\label{theo:SpecRDirectProductFGTFNilpotentWithAbelian}
	Let \(N_{1}, \ldots, N_{k}\) be non-isomorphic \fgtf non-abelian nilpotent groups and let \(r_{1}, \ldots, r_{k}\) be positive integers. Put \(N = \Times\limits_{i = 1}^{k} N_{i}^{r_{i}}\). Suppose that, for each \(i \in \{1, \ldots, k\}\), \(N_{i}\) is rationally indecomposable. Then
	\[
		\SpecR(N) = \prod_{i = 1}^{k} \left(\bigcup_{j = 1}^{r_{i}} \SpecR(N_{i})^{(j)}\right).
	\]
	In particular, \(N\) has the \(\Rinf\)-property if and only if \(N_{i}\) has the \(\Rinf\)-property for some \(i \in \{1, \ldots, k\}\).
\end{theorem}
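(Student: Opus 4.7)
The plan is to derive this theorem as an immediate consequence of the preceding corollary, by verifying that its two hypotheses on every automorphism of $N$ follow from \cref{theo:AutomorphismGroupDirectProductFGTFNilpotentGroupsDirectyIndecomposableRationalClosure}, and then reading off the \(\Rinf\)-characterisation from the product formula.

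First I would observe that, after relabelling the \(r = r_{1} + \cdots + r_{k}\) factors, \(N\) is a direct product of \fgtf non-abelian nilpotent rationally indecomposable groups, so \cref{theo:AutomorphismGroupDirectProductFGTFNilpotentGroupsDirectyIndecomposableRationalClosure} applies. For each \(\phi = (\phi_{ij})_{ij} \in \Aut(N)\), that theorem supplies a bijection \(\sigma\) of \(\{1, \ldots, r\}\) with \(\phi_{i\sigma(i)}\) an isomorphism, and \(\im \phi_{ij} \leq Z(N_{i})\) whenever \(j \ne \sigma(i)\). This yields condition (1) of the corollary and half of condition (2).

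For the remaining condition \(\phi_{ij}(Z(N_{j})) = 1\) when \(j \ne \sigma(i)\), I would argue as at the end of the proof of \cref{theo:AutomorphismGroupDirectProductFGTFNilpotentGroupsDirectyIndecomposableRationalClosure}: since each \(N_{j}\) is non-abelian and rationally indecomposable, \(\malclos{N_{j}}\) has no abelian direct factor, so \cref{prop:NoAbelianFactorRationalClosureImpliesCentreInIsolatorCommutator} gives the inclusion \(Z(N_{j}) \leq \sqrt[N_{j}]{\gamma_{2}(N_{j})}\). Viewing \(\phi_{ij}\) as a homomorphism from \(N_{j}\) into the torsion-free abelian group \(Z(N_{i})\) and applying \cref{lem:isolatorCommutatorInKernelToTorsionfreeAbelian} then yields \(\sqrt[N_{j}]{\gamma_{2}(N_{j})} \leq \ker \phi_{ij}\), hence \(Z(N_{j}) \leq \ker \phi_{ij}\). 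With both hypotheses of the corollary verified, the corollary delivers the claimed formula \(\SpecR(N) = \prod_{i=1}^{k}\bigl(\bigcup_{j=1}^{r_{i}} \SpecR(N_{i})^{(j)}\bigr)\) directly.

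For the \(\Rinf\)-part: if some \(N_{i}\) has the \(\Rinf\)-property, then \(\SpecR(N_{i}) = \{\infty\}\), so \(\bigcup_{j=1}^{r_{i}} \SpecR(N_{i})^{(j)} = \{\infty\}\), and the product collapses to \(\{\infty\}\) via the convention \(a \cdot \infty = \infty\). Conversely, if no \(N_{i}\) has \(\Rinf\), pick a finite \(a_{i} \in \SpecR(N_{i})\) for each \(i\); then \(a_{1} \cdots a_{k} \in \SpecR(N)\) is finite (by using the \(j = 1\) summand in each factor of the product), so \(N\) does not have \(\Rinf\). The main work has already been absorbed into \cref{theo:AutomorphismGroupDirectProductFGTFNilpotentGroupsDirectyIndecomposableRationalClosure} and its corollary, and the only genuine step here is the short verification that \(\phi_{ij}(Z(N_{j})) = 1\), which bridges the automorphism-group description to the corollary's hypothesis; I expect no other obstacle.
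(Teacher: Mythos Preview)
Your proposal is correct and mirrors the paper's proof exactly: the paper simply states that the condition \(\phi_{ij}(Z(N_{j})) = 1\) for \(j \ne \sigma(i)\) follows from the proof of \cref{theo:AutomorphismGroupDirectProductFGTFNilpotentGroupsDirectyIndecomposableRationalClosure} and that the remaining hypotheses of the corollary follow from \cref{theo:AutomorphismGroupDirectProductFGTFNilpotentGroupsDirectyIndecomposableRationalClosure} itself. You have merely made explicit the combination of \cref{prop:NoAbelianFactorRationalClosureImpliesCentreInIsolatorCommutator} and \cref{lem:isolatorCommutatorInKernelToTorsionfreeAbelian} that the paper invokes by reference, and spelled out the \(\Rinf\) equivalence that the paper leaves implicit.
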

\begin{proof}
	The condition that \(\phi_{ij}(Z(N_{j})) = 1\) for all \(i, j \in \{1, \ldots, k\}\) with \(j \ne \sigma(i)\) follows from the proof of \cref{theo:AutomorphismGroupDirectProductFGTFNilpotentGroupsDirectyIndecomposableRationalClosure}. The rest follows from \cref{theo:AutomorphismGroupDirectProductFGTFNilpotentGroupsDirectyIndecomposableRationalClosure} itself.
\end{proof}
\subsection{Abelian factors}
Thus far, we have only considered direct products of non-abelian nilpotent groups. We now address the situation with abelian factors. We start by recalling the well-known tool for computing Reidemeister numbers on \fgtf abelian groups, see \eg \cite[\S~3]{Romankov11}.
\begin{prop}	\label{prop:ReidemeisterNumbersFreeAbelian}
	Let \(A\) be a \fgtf abelian group of rank \(r \geq 1\), so \(A \cong \Z^{r}\).
	\begin{enumerate}[1)]
		\item Let \(\phi \in \End(A)\). Then
		\[
			R(\phi) = \begin{cases}
				|\det(\Id_{A} - \phi)|	&	\mbox{if } \det(\Id_{A} - \phi) \ne 0,	\\
				\infty				&	\mbox{otherwise.}
			\end{cases}
		\]
		Here, given \(\psi \in \End(A)\), \(\det(\psi)\) is the determinant of any matrix representation of \(\psi\) w.r.t.\ a \(\Z\)-basis of \(A\).
		\item We have that
		\[
			\SpecR(A) = \begin{cases}
				\{2, \infty\}	&	\mbox{if \(r = 1\)},	\\
				\N_{0} \cup \{\infty\}	&	\mbox{otherwise.}
			\end{cases}
		\]
	\end{enumerate}
\end{prop}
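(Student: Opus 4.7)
The plan is to first convert Reidemeister classes into cosets of a subgroup, and then reduce the remaining claims to standard facts about integer matrices. Writing $A$ additively, two elements $x, y \in A$ are $\phi$-conjugate precisely when there exists $z \in A$ with $x - y = z - \phi(z) = (\Id_A - \phi)(z)$. Hence the Reidemeister classes of $\phi$ coincide with the cosets of $\im(\Id_A - \phi)$ in $A$, so $R(\phi) = [A : \im(\Id_A - \phi)]$.

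To deduce the first item, I would fix a $\Z$-basis of $A \cong \Z^r$ and let $M$ be the integer matrix of $\Id_A - \phi$. The claim reduces to the assertion that $[\Z^r : M\Z^r] = |\det M|$ when $\det M \neq 0$, and is infinite otherwise. This follows from Smith normal form: writing $M = UDV$ with $U, V \in \GL(r, \Z)$ and $D$ diagonal with non-negative entries $d_1, \ldots, d_r$, one has $M\Z^r = UD\Z^r$, so $[\Z^r : M\Z^r] = [\Z^r : D\Z^r]$. If some $d_i = 0$, this index is infinite and $\det M = 0$; otherwise it equals $d_1 \cdots d_r = |\det D| = |\det M|$.

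For the second item, $\Id_A$ always yields $\det(\Id_A - \Id_A) = 0$ and thus $R(\Id_A) = \infty$. When $r = 1$, the only other automorphism is $-\Id$, for which $R(-\Id) = |1 - (-1)| = 2$; hence $\SpecR(\Z) = \{2, \infty\}$. When $r \geq 2$, I would realise every positive integer $n$ by a single explicit automorphism, for instance the companion matrix of the polynomial $t^r + n t^{r-1} - 1$. Its constant term is $\pm 1$, so it lies in $\GL(r, \Z)$; its characteristic polynomial evaluated at $1$ equals $n$ up to sign, so the first item gives $R(\phi) = n$. Together with $R(\Id_A) = \infty$, this yields $\SpecR(\Z^r) = \N_0 \cup \{\infty\}$.

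The argument is essentially classical and I anticipate no serious obstacle. The only mild subtlety is in the second item for $r \geq 2$: a naive block-diagonal recipe $\phi_n \oplus \psi$ breaks down already in rank $3$, since $\SpecR(\Z) = \{2, \infty\}$ contains no $1$ to put on the extra block. Exhibiting a single polynomial family such as the companion matrix above sidesteps this issue uniformly in~$r$.
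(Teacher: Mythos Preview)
The paper does not actually prove this proposition; it simply recalls it as a well-known fact with a reference to \cite[\S~3]{Romankov11}. Your argument is correct and is the standard one: the identification of Reidemeister classes with cosets of $\im(\Id_A-\phi)$ together with Smith normal form gives item~(1), and your companion-matrix family for $t^r+nt^{r-1}-1$ cleanly realises every positive integer as a Reidemeister number in rank $r\geq 2$, handling item~(2). There is nothing to compare against in the paper itself, and no gap in what you wrote.
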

\begin{theorem}
	Let \(N\) be a \fgtf nilpotent group such that \(\malclos{N}\) does not have an abelian direct factor. Let \(r \geq 1\) be an integer. Then
	\[
		\SpecR(N \times \Z^{r}) = \SpecR(N) \cdot \SpecR(\Z^{r}).
	\]
\end{theorem}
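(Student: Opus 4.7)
The plan is to show both inclusions; the inclusion \(\SpecR(N) \cdot \SpecR(\Z^{r}) \subseteq \SpecR(N \times \Z^{r})\) is immediate from \cref{prop:SpecRDirectProductCharacteristicSubgroups}. For the reverse, I would take an arbitrary \(\phi = (\phi_{ij})_{ij} \in \Aut(N \times \Z^{r})\) and prove that \(R(\phi) = R(\phi_{11}) \cdot R(\phi_{22})\), where \(\phi_{11} \in \Aut(N)\) and \(\phi_{22} \in \Aut(\Z^{r})\). This immediately gives \(R(\phi) \in \SpecR(N) \cdot \SpecR(\Z^{r})\).

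The structural step is to analyse the off-diagonal blocks. The image \(\im \phi_{12}\) is abelian (being the image of \(\Z^{r}\)) and, by the commuting condition, commutes with \(\im \phi_{11}\); \cref{lem:automorphismOfDirectProductImpliesNormalImages} then gives \(N = \im \phi_{11} \cdot \im \phi_{12}\), forcing \(\im \phi_{12} \leq Z(N)\). Symmetrically, \cref{lem:isolatorCommutatorInKernelToTorsionfreeAbelian} applied to \(\phi_{21}: N \to \Z^{r}\) together with \cref{prop:NoAbelianFactorRationalClosureImpliesCentreInIsolatorCommutator} yields \(Z(N) \leq \sqrt[N]{\gamma_{2}(N)} \leq \ker \phi_{21}\). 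A short commutator argument, using \(N = \im \phi_{11} \cdot \im \phi_{12}\) and the centrality of \(\im \phi_{12}\), then shows \(\phi_{11}(Z(N)) \leq Z(N)\). Putting these together, \(\phi\) restricts to an automorphism \(\phi_{Z}\) of \(Z(N \times \Z^{r}) = Z(N) \times \Z^{r}\) whose matrix representation is block upper triangular with diagonal entries \(\restr{\phi_{11}}{Z(N)}\) and \(\phi_{22}\). Since \(Z(N) \times \Z^{r}\) is finitely generated torsion-free abelian and the determinant of \(\phi_{Z}\) is \(\pm 1\), both diagonal entries must be automorphisms; \cref{lem:endomorphsimsInducingAutomorphismOnCenterIsAutomorphismFGTFNilpotent} then upgrades \(\phi_{11}\) itself to an automorphism of \(N\).

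The final step is the Reidemeister number computation. \cref{prop:ReidemeisterNumberEqualsProductOfCentreAndQuotientNumber} gives \(R(\phi) = R(\phi_{Z}) R(\bar{\phi})\), where the induced map \(\bar{\phi}\) on \((N \times \Z^{r})/Z(N \times \Z^{r}) \cong N/Z(N)\) is exactly \(\bar{\phi_{11}}\), hence \(R(\bar{\phi}) = R(\bar{\phi_{11}})\). The block upper triangular form of \(\phi_{Z}\) and \cref{prop:ReidemeisterNumbersFreeAbelian} give \(\det(\Id - \phi_{Z}) = \det(\Id - \restr{\phi_{11}}{Z(N)}) \cdot \det(\Id - \phi_{22})\), hence \(R(\phi_{Z}) = R(\restr{\phi_{11}}{Z(N)}) \cdot R(\phi_{22})\), where the convention \(a \cdot \infty = \infty\) handles the vanishing determinant case. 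A second application of \cref{prop:ReidemeisterNumberEqualsProductOfCentreAndQuotientNumber}, now to \(\phi_{11} \in \Aut(N)\), gives \(R(\phi_{11}) = R(\restr{\phi_{11}}{Z(N)}) R(\bar{\phi_{11}})\), and combining everything produces \(R(\phi) = R(\phi_{11}) \cdot R(\phi_{22})\).

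The main obstacle I expect is the verification that \(\phi_{11}\) stabilises \(Z(N)\) and that the resulting block upper triangular form of \(\phi_{Z}\) really forces both diagonal entries to be automorphisms; once this centre/quotient split is in place, the two applications of \cref{prop:ReidemeisterNumberEqualsProductOfCentreAndQuotientNumber} telescope cleanly into the desired product formula.
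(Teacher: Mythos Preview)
Your proposal is correct and follows essentially the same route as the paper: kill the lower-left block via \cref{lem:isolatorCommutatorInKernelToTorsionfreeAbelian} and \cref{prop:NoAbelianFactorRationalClosureImpliesCentreInIsolatorCommutator}, read off the block upper triangular form of \(\phi_{Z}\), deduce that the diagonal blocks are automorphisms, and then split \(R(\phi)\) via \cref{prop:ReidemeisterNumberEqualsProductOfCentreAndQuotientNumber} and \cref{prop:ReidemeisterNumbersFreeAbelian}. The one place where you work harder than necessary is the ``main obstacle'' you flag: once \(\phi_{21}(Z(N)) = 0\), the fact that \(\phi_{11}(Z(N)) \leq Z(N)\) is automatic from characteristicity of \(Z(N \times \Z^{r}) = Z(N) \times \Z^{r}\), so no separate commutator argument is needed (and your explicit verification that \(\im \phi_{12} \leq Z(N)\) via \cref{lem:automorphismOfDirectProductImpliesNormalImages}, while correct, is likewise subsumed).
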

\begin{proof}
	Let \(\phi \in \Aut(N \times \Z^{r})\) and write
	\[
		\phi = \begin{pmatrix} \alpha & \beta \\ \gamma & \delta \end{pmatrix}.
	\]
	Since \(\gamma\) maps into \(\Z^{r}\), \cref{lem:isolatorCommutatorInKernelToTorsionfreeAbelian} implies that \(\sqrt[N]{\gamma_{2}(N)} \leq \ker \gamma\). As \(\malclos{N}\) does not have an abelian factor, \(Z(N) \leq \sqrt[N]{\gamma_{2}(N)}\) by \cref{prop:NoAbelianFactorRationalClosureImpliesCentreInIsolatorCommutator}. Therefore, if we look at the restricted automorphism \(\phi_{Z}\) on \(Z(N \times \Z^{r}) = Z(N) \times \Z^{r}\), we get
	\[
		\phi_{Z} = \begin{pmatrix} \alpha' & \beta \\ 0 & \delta \end{pmatrix},
	\]
	where \(\alpha'\) is the restriction of \(\alpha\) to \(Z(N)\). As \(Z(N)\) is \fgtf abelian, \(R(\phi_{Z}) = R(\alpha') R(\delta)\) by \cref{prop:ReidemeisterNumbersFreeAbelian}. Note that both \(\alpha'\) and \(\delta\) are automorphisms, since \(\phi_{Z}\) is an automorphism on a finitely generated free abelian group. By \cref{lem:endomorphsimsInducingAutomorphismOnCenterIsAutomorphismFGTFNilpotent}, \(\alpha\) is then an automorphism as well. The induced automorphism on \(\frac{N \times \Z^{r}}{Z(N \times \Z^{r})}\) has `matrix' representation
	\[
		\bar{\phi} = \begin{pmatrix} \bar{\alpha} \end{pmatrix},
	\]
	as \(\frac{N \times \Z^{r}}{Z(N \times \Z^{r})} = \frac{N}{Z(N)} \times 1\). Consequently, \(R(\bar{\phi}) = R(\bar{\alpha})\). By \cref{prop:ReidemeisterNumberEqualsProductOfCentreAndQuotientNumber}, \(R(\phi) = R(\bar{\phi})R(\phi_{Z})\). Therefore,
	\[
		R(\phi) = R(\bar{\phi})R(\phi_{Z})= R(\bar{\alpha})R(\alpha')R(\delta) = R(\alpha)R(\delta).
	\]
	Since \(\alpha\) and \(\delta\) are automorphisms, we get the inclusion \(\SpecR(N \times \Z^{r}) \subseteq \SpecR(N) \cdot \SpecR(\Z^{r})\). The other inclusion follows from \cref{prop:SpecRDirectProductCharacteristicSubgroups}.
\end{proof}
To conclude, we combine \cref{theo:SpecRDirectProductFGTFNilpotentWithAbelian} with \cref{theo:AutomorphismGroupDirectProductFGTFNilpotentGroupsDirectyIndecomposableRationalClosure}.
\begin{cor}	\label{cor:ReidemeisterSpectrumDirectProductFGTFRationallyIndecomposableNilpotentGroupsAbelianFactors}
	Let \(N_{1}, \ldots, N_{k}\) be non-isomorphic \fgtf non-abelian nilpotent groups such that \(N_{i}\) is rationally indecomposable for each \(i \in \{1, \ldots, k\}\). Let \(r_{1}, \ldots, r_{k}, r\) be positive integers. Put \(N = \Times\limits_{i = 1}^{k} N_{i}^{r_{i}}\). Then
	\[
		\SpecR(N \times \Z^{r}) = \SpecR(\Z^{r}) \cdot \prod_{i = 1}^{k} \left(\bigcup_{j = 1}^{r_{i}} \SpecR(N_{i})^{(j)}\right).
	\]
	In particular, \(N \times \Z^{r}\) has the \(\Rinf\)-property if and only if \(N_{i}\) has the \(\Rinf\)-property for some \(i \in \{1, \ldots, k\}\).
\end{cor}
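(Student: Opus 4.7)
The plan is to chain two preceding results. Applied with its ambient group taken to be our \(N = \Times_{i=1}^{k} N_{i}^{r_{i}}\), the theorem immediately preceding this corollary gives \(\SpecR(N \times \Z^{r}) = \SpecR(N) \cdot \SpecR(\Z^{r})\), provided that \(\malclos{N}\) has no abelian direct factor. Once this hypothesis is in hand, \cref{theo:SpecRDirectProductFGTFNilpotentWithAbelian} (whose remaining assumptions are already those of the present corollary) rewrites \(\SpecR(N)\) as \(\prod_{i=1}^{k} \bigcup_{j=1}^{r_{i}} \SpecR(N_{i})^{(j)}\), and substituting yields the claimed equality.

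The main obstacle is thus verifying that \(\malclos{N}\) has no abelian direct factor. By iterated use of \cref{prop:decomposablefgtfNilpotentGroupHasDecomposableMalcevCompletion}, \(\malclos{N} \cong \Times_{i=1}^{k} \malclos{N_{i}}^{r_{i}}\), and each \(\malclos{N_{i}}\) is directly indecomposable (from rational indecomposability of \(N_{i}\)) and non-abelian (as \(N_{i}\) embeds into \(\malclos{N_{i}}\)). Hence \cref{prop:NoAbelianFactorRationalClosureImpliesCentreInIsolatorCommutator} gives \(Z(N_{i}) \leq \sqrt[N_{i}]{\gamma_{2}(N_{i})}\) for each \(i\), and combining this with \eqref{eq:isolatorDirectProducts} and the componentwise behaviour of \(Z\) and \(\gamma_{2}\) under direct products yields \(Z(N) \leq \sqrt[N]{\gamma_{2}(N)}\). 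Suppose now, for contradiction, that \(\malclos{N} = A \times B\) with \(A \neq 1\) abelian. Picking \(a \in A \setminus \{1\}\) and choosing \(k \geq 1\) with \(a^{k} \in N\), centrality of \(A\) in \(\malclos{N}\) forces \(a^{k} \in Z(N)\), so \(a^{km} \in \gamma_{2}(N) \leq \gamma_{2}(\malclos{N}) = [B,B] \leq B\) for some \(m \geq 1\) (using that \(A\) is central and abelian, so every commutator lies in \(B\)). But \(a^{km} \in A\) as well, so \(a^{km} \in A \cap B = 1\), contradicting torsion-freeness of \(\malclos{N}\).

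Finally, for the \(\Rinf\) assertion, \cref{prop:ReidemeisterNumbersFreeAbelian} guarantees that \(\SpecR(\Z^{r})\) contains the finite value \(2\). Hence the product \(\SpecR(\Z^{r}) \cdot \prod_{i} \bigcup_{j} \SpecR(N_{i})^{(j)}\) equals \(\{\infty\}\) exactly when some factor \(\bigcup_{j=1}^{r_{i}} \SpecR(N_{i})^{(j)}\) equals \(\{\infty\}\); since this union contains \(\SpecR(N_{i})\) as its \(j = 1\) term, that happens precisely when \(\SpecR(N_{i}) = \{\infty\}\), \ie when \(N_{i}\) has the \(\Rinf\)-property.
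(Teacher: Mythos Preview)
Your proof is correct and follows the same overall scaffold as the paper: apply the preceding theorem to split off the \(\Z^{r}\) factor, then invoke \cref{theo:SpecRDirectProductFGTFNilpotentWithAbelian} for \(\SpecR(N)\). The only substantive difference lies in how you verify that \(\malclos{N}\) has no abelian direct factor. The paper argues structurally: \(\malclos{N} = \Times_{i}(\malclos{N_{i}})^{r_{i}}\) is a decomposition into directly indecomposable pieces, and by the Krull--Schmidt-type uniqueness result \cite[Proposition~10]{BaumslagMillerOstheimer16} any abelian direct factor would have to appear among these, which it cannot since each \(\malclos{N_{i}}\) is non-abelian. You instead establish \(Z(N) \leq \sqrt[N]{\gamma_{2}(N)}\) componentwise via \cref{prop:NoAbelianFactorRationalClosureImpliesCentreInIsolatorCommutator} and \eqref{eq:isolatorDirectProducts}, and then run a direct contradiction showing that any abelian factor \(A\) of \(\malclos{N}\) would force a nontrivial element of \(A\) to have a power in \(\gamma_{2}(\malclos{N}) \leq B\), violating torsion-freeness. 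Your route is self-contained within the paper (no appeal to the external uniqueness theorem) and in effect proves the converse of \cref{prop:NoAbelianFactorRationalClosureImpliesCentreInIsolatorCommutator}; the paper's route is shorter but imports a nontrivial result. Your treatment of the \(\Rinf\) clause is also a bit more explicit than the paper's, and is fine.
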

\begin{proof}
	We argue that \(\malclos{N}\) has no abelian factors. Note that
	\[
		\malclos{N} = \Times_{i = 1}^{k} \left(\malclos{N}_{i}\right)^{r_{i}},
	\]
	by \cref{prop:decomposablefgtfNilpotentGroupHasDecomposableMalcevCompletion}, and that this is a decomposition into directly indecomposable groups. By \cite[Proposition~10]{BaumslagMillerOstheimer16}, such a decomposition is unique up to isomorphism and order of the factors. Since none of the \(\malclos{N}_{i}\) is abelian, \(N\) satisfies the conditions of \cref{theo:SpecRDirectProductFGTFNilpotentWithAbelian}. The result then follows after also applying \cref{theo:AutomorphismGroupDirectProductFGTFNilpotentGroupsDirectyIndecomposableRationalClosure} to \(N\).
\end{proof}
\section{Examples}
As mentioned in the introduction, it has already been proven that the conclusion from \cref{cor:ReidemeisterSpectrumDirectProductFGTFRationallyIndecomposableNilpotentGroupsAbelianFactors} holds if all \(N_{i}\) belong to either of two families: the free nilpotent groups of finite rank \cite{Tertooy19}, or the (indecomposable) \(2\)-step nilpotent groups associated to graphs \cite{DekimpeLathouwers21}. Here, we provide arguments to show that both are special cases of \cref{cor:ReidemeisterSpectrumDirectProductFGTFRationallyIndecomposableNilpotentGroupsAbelianFactors}.
\subsection{Free nilpotent groups}
For integers \(r, c \geq 2\), let \(N_{r, c}\) denote the free nilpotent group of rank \(r\) and class \(c\).
\begin{prop}	\label{prop:FreeNilpotentGroupsRationallyIndecomposable}
	Let \(r, c \geq 2\) be integers. Then \(N_{r, c}\) is rationally indecomposable.
\end{prop}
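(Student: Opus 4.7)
The strategy is to pass to the associated Lie algebra and use the universal property of the free nilpotent Lie algebra. Via the Malcev--Baumslag correspondence, the rational completion \(\malclos{N_{r, c}}\) may be identified (through the Baker--Campbell--Hausdorff formula) with the group whose underlying set is the free nilpotent \(\Q\)-Lie algebra \(\n\) of rank \(r\) and class \(c\); under this identification, internal group direct product decompositions of \(\malclos{N_{r, c}}\) correspond exactly to Lie algebra direct sum decompositions \(\n = \mathfrak{a} \oplus \mathfrak{b}\) with \([\mathfrak{a}, \mathfrak{b}] = 0\). It therefore suffices to show that \(\n\) is directly indecomposable as a Lie algebra.

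Suppose, for contradiction, that \(\n = \mathfrak{a} \oplus \mathfrak{b}\) with both ideals nontrivial. Since a nonzero nilpotent Lie algebra has nonzero abelianisation, passing to \(\n^{\mathrm{ab}} := \n / [\n, \n] \cong \Q^{r}\) gives a direct sum \(\n^{\mathrm{ab}} = \mathfrak{a}^{\mathrm{ab}} \oplus \mathfrak{b}^{\mathrm{ab}}\) with \(s := \dim \mathfrak{a}^{\mathrm{ab}}\) satisfying \(1 \leq s \leq r - 1\). I then pick \(a_{1}, \ldots, a_{s} \in \mathfrak{a}\) and \(b_{1}, \ldots, b_{r - s} \in \mathfrak{b}\) whose images form a \(\Q\)-basis of \(\n^{\mathrm{ab}}\). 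By the universal property of the free nilpotent Lie algebra on generators \(x_{1}, \ldots, x_{r}\) of class \(c\), there is an endomorphism \(\phi : \n \to \n\) with \(\phi(x_{i}) = a_{i}\) for \(i \leq s\) and \(\phi(x_{s + j}) = b_{j}\) for \(j \leq r - s\).

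By construction, \(\phi\) induces an isomorphism on \(\n^{\mathrm{ab}}\), and a standard Nakayama-style induction down the lower central series (iterating \(\n = \phi(\n) + [\n, \n]\) to obtain \(\n = \phi(\n) + \gamma_{k}(\n)\) for all \(k\), then using \(\gamma_{c + 1}(\n) = 0\)) lifts this to \(\phi\) being surjective, hence by finite-dimensionality an automorphism. However, \([x_{1}, x_{s + 1}] \in \n\) is nonzero --- the brackets \([x_{i}, x_{j}]\) with \(i < j\) are linearly independent in the degree-\(2\) component of the free nilpotent Lie algebra for \(r \geq 2\) and \(c \geq 2\) --- so \(\phi([x_{1}, x_{s + 1}]) = [a_{1}, b_{1}]\) is also nonzero, contradicting \([\mathfrak{a}, \mathfrak{b}] = 0\).

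The main obstacle I anticipate is the initial bridge: justifying that group direct product decompositions of \(\malclos{N_{r, c}}\) correspond to Lie algebra direct sum decompositions of \(\n\). One can invoke the Malcev--Baumslag machinery, or sidestep it by carrying out the entire argument inside the group \(\malclos{N_{r, c}}\): pick \(a_{1}, \ldots, a_{s}\) in the first factor and \(b_{1}, \ldots, b_{r-s}\) in the second factor whose images form a \(\Q\)-basis of the abelianisation, replace them if necessary by integer powers lying in \(N_{r, c}\) (possible by the defining properties of the rational completion), apply the universal property of the free nilpotent group to produce an endomorphism, extend rationally to \(\malclos{N_{r, c}}\), verify it is an automorphism, and derive a contradiction from the commuting condition \([a_{1}, b_{1}] = 1\).
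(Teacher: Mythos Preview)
Your argument is correct and takes a genuinely different route from the paper's. Both proofs pass to the associated rational Lie algebra \(\n\) and assume a nontrivial decomposition \(\n = \n_{1} \oplus \n_{2}\), but diverge from there. The paper proceeds by a dimension count: Witt's formula gives \(\dim_{\Q} \gamma_{2}(\n)/\gamma_{3}(\n) = \binom{r}{2}\), while the decomposition forces this dimension to be at most \(\binom{r_{1}}{2} + \binom{r_{2}}{2}\) where \(r_{i} = \dim_{\Q} \n_{i}/[\n_{i},\n_{i}]\); the inequality \(\binom{r_{1}}{2} + \binom{r_{2}}{2} \geq \binom{r_{1}+r_{2}}{2}\) then forces \(r_{1}r_{2} = 0\). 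Your approach instead exploits the universal property of \(\n\) as a free nilpotent Lie algebra: you realise elements of the two summands as images of the free generators under an endomorphism, promote it to an automorphism via the induced isomorphism on the abelianisation, and obtain a contradiction from a single vanishing bracket. The paper's argument is more elementary and self-contained (no universal property, just a binomial identity), whereas yours is more structural and would adapt verbatim to other free objects in varieties of nilpotent algebras; it also makes transparent \emph{why} freeness is the obstruction to decomposability, rather than deducing it from a numerical coincidence.
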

\begin{proof}
	For \(i \in \{1, \ldots, c + 1\}\), let \(\Gamma_{i} = \gamma_{i}(N_{r, c})\). By Witt's formula \cite{Witt37}, the \(\Z\)-rank of \(\Gamma_{2} / \Gamma_{3}\) equals \(\frac{r^{2} - r}{2} = \binom{r}{2}\).
	Since \(\gamma_{i}(\malclos{N}_{r, c}) = \malclos{\Gamma}_{i}\) for each \(i \in \{1, \ldots, c\}\) by \cite[Corollary~5.2.2]{CorwinGreenleaf90}, we also have that the \(\Q\)-dimension of \(\gamma_{2}(\malclos{N}_{r, c}) / \gamma_{3}(\malclos{N}_{r, c})\) equals \(\binom{r}{2}\).
	
	Under the Malcev correspondence, \(\malclos{N}_{r, c}\) corresponds to a rational nilpotent Lie algebra \(\n\). Moreover, this correspondence maps \(\gamma_{i}(\malclos{N}_{r, c})\) onto \(\gamma_{i}(\n)\). Consequently, \(\gamma_{2}(\n) / \gamma_{3}(\n)\) has \(\Q\)-dimension \(\binom{r}{2}\) as well.
	
	Now, suppose that \(\malclos{N}_{r, c}\) splits as a direct product. Then \(\n\) also splits as a direct sum of Lie subalgebras (see \eg \cite[Proposition~9]{BaumslagMillerOstheimer16}), say, \(\n = \n_{1} \oplus \n_{2}\). Then \(\n / \gamma_{2}(\n) = \frac{\n_{1}}{\gamma_{2}(\n_{1})} \oplus \frac{\n_{2}}{\gamma_{2}(\n_{2})}\). The left-hand side is an \(r\)-dimensional \(\Q\)-vector space. Hence, if \(r_{i}\) is the \(\Q\)-dimension of \(\frac{\n_{i}}{\gamma_{2}(\n_{i})}\), then \(r_{1} + r_{2} = r\). 
	
	Now, since \(\n = \n_{1} \oplus \n_{2}\) and thus in particular \([\n_{1}, \n_{2}] = 0\), \(\gamma_{2}(\n)\) is generated (as a Lie algebra) by at most
	\[
		\binom{r_{1}}{2} + \binom{r_{2}}{2}
	\]
	 elements. Consequently, the \(\Q\)-dimension of \(\gamma_{2}(\n) / \gamma_{3}(\n)\) is at most \(\binom{r_{1}}{2} + \binom{r_{2}}{2}\), which yields
	\[
		\binom{r_{1}}{2} + \binom{r_{2}}{2} \geq \binom{r}{2} = \binom{r_{1} + r_{2}}{2}.
	\]
	Working out both sides and multiplying by \(2\), we obtain
	\[
		r_{1}^{2} - r_{1} + r_{2}^{2} - r_{2} \geq (r_{1} + r_{2})^{2} - (r_{1} + r_{2}).
	\]
	Cancelling equal terms on both sides yields
	\[
		0 \geq 2r_{1} r_{2},
	\]
	which proves that \(r_{1} = 0\) or \(r_{2} = 0\). Consequently, either \(\n_{1}\) or \(\n_{2}\) is trivial, which implies that \(\n\) does not split as a non-trivial direct sum. Therefore, \(N_{r, c}^{\Q}\) is directly indecomposable.
\end{proof}
By this proposition, we see that \cref{cor:ReidemeisterSpectrumDirectProductFGTFRationallyIndecomposableNilpotentGroupsAbelianFactors} applies to all free nilpotent groups of finite rank, which yields the result of S.\ Tertooy \cite[Theorem~6.5.6]{Tertooy19}.

\subsection{2-step nilpotent groups associated to a graph}
Given a finite simple graph \(\Gamma(V, E)\), we can associate to it the right-angled Artin group \(A_{\Gamma}\) given by the presentation
\[
	\grppres{V}{[v, w] \text{ if \(vw \in E\)}}.
\]
Here, \(vw \in E\) means that \(v\) and \(w\) are joined by an edge. We then define \(N_{\Gamma}\) to be the \(2\)-step nilpotent quotient group \(A_{\Gamma} / \gamma_{3}(A_{\Gamma})\).

We can also associate a rational Lie algebra to \(\Gamma\). Let \(W\) be the \(\Q\)-vector space with basis \(V\). Let \(U\) be the subspace of \(\bigwedge^{2} W\) generated by the set \(\{v \wedge w \mid vw \not\in E\}\). We then define \(\n_{\Gamma}^{\Q}\) to be the Lie algebra with underlying vector space \(W \oplus U\) and Lie bracket defined on \(V\) by
\[
	[v, w] = \begin{cases}
		v \wedge w & \mbox{ if } vw \not\in E,	\\
		0	&	\mbox{otherwise},
	\end{cases}
\]
and \([W, U] = [U, U] = 0\). Under the Malcev correspondence, \(\n_{\Gamma}^{\Q}\) corresponds to a torsion-free \(2\)-step nilpotent group \(G_{\Gamma}^{\Q} = \{e^{x} \mid x \in W \oplus U\}\), where the group operation is given by
\begin{equation}	\label{eq:BCH2dimensions}
	e^{x} e^{y} = e^{x + y + \frac{1}{2}[x, y]_{L}}.
\end{equation}
We write \([\cdot, \cdot]_{L}\) to make clear it is the Lie bracket, not the commutator bracket. We argue that \(G_{\Gamma}^{\Q}\) is the rational completion of \(N_{\Gamma}\). If we let \(F(V)\) denote the free group on \(V\), it is readily verified that the map \(F(V) \to G_{\Gamma}^{\Q}\) sending \(v\) to \(e^{v}\) induces a group homomorphism \(F: N_{\Gamma} \to G_{\Gamma}^{\Q}\). Now, write \(V = \{v_{1}, \ldots, v_{n}\}\). Then each element \(x \in N_{\Gamma}\) can be written as 
\[
	x = \left(\prod_{i = 1}^{n} v_{i}^{k_{i}}\right)c
\]
for some \(k_{i} \in \Z\) and \(c \in \gamma_{2}(N_{\Gamma})\). From the product formula \(e^{x}e^{y} = e^{x + y + \frac{1}{2}[x, y]}\), it follows that \(F(x)\) is of the form
\[
	e^{\sum\limits_{i = 1}^{n} k_{i} v_{i} + u},
\]
for some \(u \in U\). Suppose that \(F(x) = 1\). Then \(\sum\limits_{i = 1}^{n} k_{i} v_{i} + u = 0\). As \(\sum\limits_{i = 1}^{n}k_{i}v_{i} \in W\) and \(u \in U\), we must have \(\sum\limits_{i = 1}^{n} k_{i}v_{i} = 0\) and \(u = 0\). Therefore, \(k_{i} = 0\) for all \(i \in \{1, \ldots, n\}\), as \(V\) is a basis of \(W\). Hence, \(x = c \in \gamma_{2}(N_{\Gamma})\). As \(N_{\Gamma}\) is \(2\)-step nilpotent, we can write
\[
	c = \prod_{\substack{1 \leq i < j \leq n \\ v_{i}v_{j} \not\in E}}[v_{i}, v_{j}]^{l_{ij}}
\]
for some \(l_{ij} \in \Z\). Using \eqref{eq:BCH2dimensions}, it is readily verified that \([e^{y}, e^{z}] = e^{[y, z]_{L}}\) for all \(y, z \in L\). Therefore, \(F(x) = F(c) = e^{z}\), where
\[
	z = \sum\limits_{\substack{1 \leq i < j \leq n \\ v_{i}v_{j} \not\in E}}l_{ij}[v_{i}, v_{j}]_{L}.
\]
Since \(F(x) = 1\), we must have \(z = 0\). Consequently, as \(\{[v_{i}, v_{j}]_{L} \mid 1 \leq i < j \leq n, v_{i}v_{j} \not\in E\}\) is a basis of \(U\), we must have that \(l_{ij} = 0\) for all \(i, j\), which implies that \(c = 1\) as well. We conclude that \(F\) is indeed injective, so \(N_{\Gamma}\) embeds in \(G_{\Gamma}^{\Q}\).

We now verify the other two properties for \(G_{\Gamma}^{\Q}\) to be the rational completion of \(N_{\Gamma} \cong \im F\). Let \(e^{w + u} \in \im F\) and \(k \in \N_{0}\). Then
\[
	e^{w + u} = (e^{\frac{1}{k}(w + u)})^{k}.
\]
Hence, all elements of \(\im F\) have roots in \(G_{\Gamma}^{\Q}\), which are unique as \(G_{\Gamma}^{\Q}\) is torsion-free and nilpotent.

Next, as \([e^{y}, e^{z}] = e^{[y, z]_{L}}\) for all \(y, z \in L\), \(e^{k[v, w]_{L}} \in \im F\) for all \(k \in \Z\) and \(v, w \in V\). Combining this with the fact that \(e^{[x, y]_{L}}e^{[z, w]_{L}} = e^{[x, y]_{L} + [z, w]_{L}}\), we get that
\begin{equation}	\label{eq:integralExponentialCommutatorInImageF}
	\forall u = \sum_{v, w \in V} \mu_{v, w}[v, w]_{L}: (\forall v, w \in V: \mu_{v, w} \in \Z) \implies e^{u} \in \im F.
\end{equation}
Furthermore, if we take \(\lambda_{i} \in \Z\) for all \(i \in \{1, \ldots, n\}\), then a straightforward calculation yields that
\begin{equation}	\label{eq:evenIntegralExponentialVerticesInImageF}
	e^{\sum_{i = 1}^{n} 2\lambda_{i}v_{i}} = \prod_{i = 1}^{n}(e^{v_{i}})^{2\lambda_{i}} \cdot \prod_{1 \leq i < j \leq n}e^{-2\lambda_{i} \lambda_{j}[v_{i}, v_{j}]} \in \im F.
\end{equation}
Finally, let \(x + u \in W \oplus U\). Write
\[
	x + u = \sum_{i = 1}^{n} \lambda_{i} v_{i} + \sum_{\substack{1 \leq i < j \leq n \\ v_{i}v_{j} \not\in E}} \mu_{i, j} [v_{i}, v_{j}]_{L}
\]
for some \(\lambda_{i}, \mu_{i, j} \in \Q\). Then there is an \(m \in \N_{0}\) such that \(m \lambda_{i}, m \mu_{i, j} \in 2\Z\) for all \(1 \leq i < j \leq n\). Then
\[
	(e^{x + u})^{m} = e^{mx + mu} \in \im F,
\]
by using \eqref{eq:integralExponentialCommutatorInImageF} and \eqref{eq:evenIntegralExponentialVerticesInImageF}. Hence, the third property is satisfied, which shows that \(G_{\Gamma}^{\Q}\) is indeed the rational completion of \(N_{\Gamma}\).

We recall the definition of the simplicial join of two graphs. Given two graphs \(\Gamma_{i}(V_{i}, E_{i})\), \(i \in \{1, 2\}\), the \emph{simplicial join} \(\Gamma_{1} * \Gamma_{2}\) is the graph \(\Gamma(V, E)\) where \(V = V_{1} \sqcup V_{2}\) and
\[
	E = E_{1} \sqcup E_{2} \sqcup \{vw \mid v \in V_{1}, w \in V_{2}\}.
\]

We say that a graph \(\Gamma\) \emph{splits as a simplicial join} if \(\Gamma = \Gamma_{1} * \Gamma_{2}\) for some non-trivial graphs \(\Gamma_{1}\) and \(\Gamma_{2}\).

\begin{prop}
	Let \(\Gamma\) be a finite simple graph. Then \(N_{\Gamma}\) is rationally indecomposable if and only if \(\Gamma\) does not split as a simplicial join.
\end{prop}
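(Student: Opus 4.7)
The plan is to transfer the question to the associated Lie algebra via the Malcev correspondence. Direct product decompositions of $\malclos{N_\Gamma} = G_\Gamma^\Q$ correspond to direct sum decompositions of $\n := \n_\Gamma^\Q = W \oplus U$ into Lie ideals (see \eg \cite[Proposition~9]{BaumslagMillerOstheimer16}), so the problem reduces to showing that $\n$ is directly indecomposable as a Lie algebra if and only if $\Gamma$ does not split as a simplicial join.

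For the ``if''-direction I suppose $\Gamma = \Gamma_1 * \Gamma_2$ with vertex sets $V_1, V_2$, both non-empty. Since every cross pair is an edge of $\Gamma$, every non-edge of $\Gamma$ has both endpoints in the same $V_i$. Setting $W_i := \Span(V_i)$ and $U_i := \Span\{v \wedge w \mid v, w \in V_i,\, vw \notin E\}$, I check that $U = U_1 \oplus U_2$ from the partitioning of the non-edges and $[W_1, W_2] = 0$ in $U$ from the join condition, which yields the non-trivial Lie algebra decomposition $\n = (W_1 \oplus U_1) \oplus (W_2 \oplus U_2)$.

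For the converse I argue contrapositively. Assume the complement $\bar\Gamma$ is connected (the case $\size{V} = 1$ being immediate) and suppose $\n = \n_1 \oplus \n_2$ non-trivially, with Lie algebra idempotent $E \colon \n \to \n$ onto $\n_1$. Since $E$ preserves $U = [\n, \n]$, it descends to an idempotent $\bar E$ on $W$; let $P$ be its matrix in the basis $V$. The commutation condition $[\n_1, \n_2] = 0$, combined with the centrality of $U$, unwinds to $[\im \bar E, \ker \bar E]_\n = 0$ in $U$, which expanded in the given bases yields, for every non-edge $(a, b)$ of $\Gamma$ and all $p, q \in V$,
\[
	P_{ap}(1 - P)_{bq} = P_{bp}(1 - P)_{aq}.
\]

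The technical heart is to extract rigidity from these identities. Specialising $(p, q)$ to pairs in $\{a, b\}$ should force, for every non-edge $(a, b)$, $P_{ab} = P_{ba} = 0$ and $P_{aa} = P_{bb} \in \{0, 1\}$; a further specialisation should then pin down rows $a$ and $b$ of $P$ to the corresponding standard basis rows (when the common value is $1$) or to zero rows (when it is $0$). Because $\bar\Gamma$ is connected, the relation $P_{ii} = P_{jj}$ propagates along paths in $\bar\Gamma$, so $P_{ii}$ is a single constant in $\{0, 1\}$ over all $i \in V$; since every vertex then has a non-edge neighbour, every row of $P$ is pinned down, giving $P = I$ or $P = 0$. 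In the first case $\ker \bar E = 0$, so $\n_2 \subseteq U$, and combined with the identity $\n_i \cap U = [\n_i, \n_i]$ (itself a consequence of $U = [\n_1,\n_1] \oplus [\n_2, \n_2]$) and the abelianness of $U$ this forces $\n_2 = 0$; the other case is symmetric. The assumed decomposition would therefore be trivial, a contradiction. The main obstacle is the rigidity derivation itself: it is essentially a careful analysis of the $2 \times 2$ minors of $P$ indexed by non-edge rows and columns, hinging on the fact that the kernel of the bracket $W \wedge W \to U$ is precisely the span of the edges of $\Gamma$.
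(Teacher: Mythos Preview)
Your argument is correct. For the easy direction both you and the paper construct a direct decomposition from a simplicial join (you at the Lie-algebra level, the paper at the group level via $A_{\Gamma} \cong A_{\Gamma_{1}} \times A_{\Gamma_{2}}$), which comes to the same thing. For the converse the two proofs diverge genuinely. The paper starts from a non-trivial decomposition $\n = \n_{1} \oplus \n_{2}$, chooses bases $\{w_{i,j} + u_{i,j}\}$ of the summands, and sets $V_{i} \subseteq V$ to be the vertices occurring in the $W$-support of some $w_{i,j}$; from $[\n_{1},\n_{2}] = 0$ it argues that every vertex of $V_{1}$ is adjacent in $\Gamma$ to every vertex of $V_{2}$, and after a short case distinction (complete graph versus not) extracts a simplicial-join partition of $V$. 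You instead assume $\bar\Gamma$ is connected and show that the projection idempotent must be $0$ or $\Id$, via the quadratic identities $P_{ap}(I-P)_{bq} = P_{bp}(I-P)_{aq}$ on the matrix of the induced map on $W$. Your route is more computational but gives a sharper linear-algebraic conclusion (every entry of $P$ is pinned down), and the propagation of $P_{aa} = P_{bb}$ along non-edges makes the role of connectedness of $\bar\Gamma$ completely explicit; the paper's route is shorter and more combinatorial, producing the vertex partition directly without ever analysing the projection matrix.
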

\begin{proof}
	Suppose that \(\Gamma = \Gamma_{1} * \Gamma_{2}\) is a non-trivial simplicial join. Then \(A_{\Gamma} \cong A_{\Gamma_{1}} \times A_{\Gamma_{2}}\). Consequently, \(N_{\Gamma}\) splits as a non-trivial direct product, which by \cref{prop:decomposablefgtfNilpotentGroupHasDecomposableMalcevCompletion} yields a decomposition of \(\malclos{N}_{\Gamma} = \malclos{G}_{\Gamma}\).
	
	Conversely, suppose that \(N_{\Gamma}\) is rationally decomposable. Similarly as in \cref{prop:FreeNilpotentGroupsRationallyIndecomposable}, the Lie algebra \(\n_{\Gamma}^{\Q}\) then admits a non-trivial decomposition \(\n_{\Gamma}^{\Q} = \n_{1} \oplus \n_{2}\). For \(i \in \{1, 2\}\), let \(\beta_{i}\) be a \(\Q\)-vector space basis of \(\n_{i}\). Write \(\beta_{i} = \{w_{i, 1} + u_{i, 1}, \ldots, w_{i, k_{i}} + u_{i, k_{i}}\}\) for some \(w_{i, j} \in W, u_{i, j} \in U\). Write \(\{v^{*}\}_{v \in V}\) for the dual basis of \(W\) associated to \(V\). Define, for \(i = 1, 2\),
	\[
		V_{i} := \{v \in V \mid \exists j \in \{1, \ldots, k_{i}\}: v^{*}(w_{i, j}) \ne 0\}.
	\]
	Since \(\Span_{\Q}(\beta_{1} \cup \beta_{2}) = W \oplus U\), we have that
	\[
		W = \Span_{\Q}(\{w_{i, j} \mid i \in \{1, 2\}, j \in \{1, \ldots, k_{i}\}\}).
	\]
	Consequently, for each \(v \in V\), there exists a \(w_{i, j}\) such that \(v^{*}(w_{i, j}) \ne 0\). Hence, \(V_{1} \cup V_{2} = V\). Next, we argue that neither \(V_{1}\) nor \(V_{2}\) is empty. Suppose, for the sake of contradiction, that \(V_{1}\) is empty. Then \(W = \Span_{\Q}(\{w_{2, j} \mid j \in \{1, \ldots, k_{2}\}\}\), which implies that, for each \(v \in V\), there exist a \(u \in U\) such that \(v + u \in \n_{2}\). Since \(\n_{2}\) is closed under the Lie bracket, this implies that
	\[
		\{[v, v'] \mid v, v' \in V\} \subseteq \n_{2}.
	\]
	From this, it follows that \(U \subseteq \n_{2}\). As there exist, for each \(v \in V\), a \(u \in U\) such that \(v + u \in \n_{2}\), \(V\) lies in \(\n_{2}\) as well. Consequently, \(W = \Span_{\Q}(V)\) also lies in \(\n_{2}\), which implies that \(\n_{2} = W + U = \n\). This contradicts the non-triviality of \(\n_{1}\). We conclude that \(V_{1}\) is non-empty, and a similar argument holds for \(V_{2}\).
	
	As \([\n_{1}, \n_{2}] = 0\), we have that \([w_{1, i}, w_{2, j}] = 0\) for all \(i \in \{1, \ldots, k_{1}\}\) and \(j \in \{1, \ldots, k_{2}\}\). Consequently, every \(v \in V_{1}\) is connected to every \(w \in V_{2}\) in \(\Gamma\) (if \(v \ne w\)). If \(V_{1} = V_{2} = V\), then \(\Gamma\) is a complete graph, which clearly splits as a simplicial join (note that \(\size{V} \geq 2\) in that case, as \(G^{\Q}_{\Gamma} \cong \Q^{\size{V}}\) is decomposable by assumption).
	
	So, assume either \(V_{1}\) or \(V_{2}\) is a strict subset of \(V\). Then either \(\{V_{1} \setminus V_{2}, V_{2}\}\) or \(\{V_{1} ,V_{2} \setminus V_{1}\}\) forms a partition \(\{W_{1}, W_{2}\}\) of \(V\) such that \(\Gamma = \Gamma(W_{1}) * \Gamma(W_{2})\), which proves that \(\Gamma\) splits as a non-trivial simplicial join.
\end{proof}
By this proposition, we see that \cref{cor:ReidemeisterSpectrumDirectProductFGTFRationallyIndecomposableNilpotentGroupsAbelianFactors} applies to all groups \(N_{\Gamma}\) where \(\Gamma\) does not split as a simplicial join, which yields the result of K.\ Dekimpe and M.\ Lathouwers \cite[\S5]{DekimpeLathouwers21}.

Finally, by \cref{prop:DirectlyIndecomposableMalcevCompletionImpliesDirectlyIndecomposable}, all finite index subgroups of free nilpotent groups or directly indecomposable \(2\)-step nilpotent quotients of RAAGs satisfy the conditions of \cref{cor:ReidemeisterSpectrumDirectProductFGTFRationallyIndecomposableNilpotentGroupsAbelianFactors} as well.

\section*{Acknowledgements}
The author thanks Karel Dekimpe for the useful remarks and suggestions, and Maarten Lathouwers and Thomas Witdouck for the helpful discussions.

\printbibliography

\end{document}